\documentclass[11pt]{amsart}
\usepackage[dvips]{epsfig}
\usepackage{graphics}
\usepackage{latexsym}
\usepackage{verbatim}
\usepackage{amsmath}
\usepackage{amsthm}
\usepackage{amssymb}
\usepackage{hyperref}
\usepackage{euscript}
\usepackage{mathrsfs}
\usepackage{dsfont}

\setcounter{tocdepth}{1}


\def\B{\hat B}
\def\H{\cal H}

\def\Sinc{\mathop{\rm Sinc}\nolimits}

\newcommand{\ens}[1]{\mathbb{#1}}

\newcommand{\NN}{\mathds{N}}
\newcommand{\ZZ}{\mathds{Z}}
\newcommand{\RR}{\mathds{R}}

\newcommand{\PP}{\mathds{P}}
\newcommand{\F}{\mathcal{F}}
\newcommand{\N}{\mathcal{N}}
\newcommand{\T}{\mathcal{T}}

\newcommand{\Pf}{\mathcal{P}}
\newcommand{\Q}{\mathcal{Q}}

\def\cal{\mathcal}
\def\F{{\cal F}} 
\def\B{{\cal B}} 
\def\M{{\cal M}} 
\def\R{{\cal R}} 

\def\derpar#1#2{\frac{\partial#1}{\partial#2}}
\def\var{\varepsilon}

\newtheorem{theorem}{Theorem}[section]

\newtheorem{proposition}[theorem]{Proposition}

\newtheorem{remark}[theorem]{Remark}

\setlength{\oddsidemargin}{0.25cm} 
\setlength{\evensidemargin}{0.25cm}
\setlength{\textwidth}{16.5cm} 
\setlength{\textheight}{22.5cm}
\setlength{\topmargin}{-0.5cm}


\def\signff{\bigskip\bigskip\hspace{80mm}
\vbox{{\sc Francis Filbet\par\vspace{3mm}
UniversitÃ© de Lyon,\par
UL1, INSAL, ECL, CNRS \par 
UMR5208, Institut Camille Jordan,\par
43 boulevard 11 novembre 1918,\par
F-69622 Villeurbanne cedex,  FRANCE
\par\vspace{3mm}e-mail:} filbet@math.univ-lyon1.fr }}

\begin{document}

\title[On deterministic approximation of the Boltzmann equation in a bounded domain]
{On deterministic approximation of the Boltzmann equation  in a bounded domain}\thanks{The author is partially supported by the European Research Council ERC Starting Grant 2009,  project 239983-\textit{NuSiKiMo}}

\author{Francis Filbet}

\hyphenation{bounda-ry rea-so-na-ble be-ha-vior pro-per-ties
cha-rac-te-ris-tic}

\begin{abstract}
In this paper we present a fully deterministic method for the numerical solution to the Boltzmann equation of rarefied gas dynamics in a bounded domain for multi-scale problems. Periodic, specular reflection and diffusive boundary conditions are discussed and investigated numerically.  The collision operator is treated by a Fourier approximation of the collision integral, which guarantees spectral accuracy in velocity with a computational cost of $N\,\log(N)$, where $N$ is the number of degree of freedom in velocity space. This algorithm is coupled with a second order finite volume scheme in space and a  time discretization allowing to deal for rarefied regimes as well as their hydrodynamic limit. Finally, several numerical tests illustrate the efficiency and accuracy of the method for unsteady flows (Poiseuille flows, ghost effects, trend to equilibrium).
\end{abstract}

\maketitle

\medskip
\noindent
{\bf Keywords.} Boltzmann equation; spectral methods; asymptotic stability; boundary values problem.

\medskip

\noindent
{\bf AMS Subject Classification.}  65N08, 65N35, 82C40.

\tableofcontents

\section{Introduction}
\label{sec1}
The Boltzmann equation describes the behavior of a dilute gas of
particles when the only interactions taken into account are binary
elastic collisions. It reads for $x \in \Omega\subset \RR^d$, $v \in \RR^d$  ($d \ge 2$):
 \begin{equation}
 \label{eq:Q}
 \derpar{f}{t} + v \cdot \nabla_x f = \frac{1}{\varepsilon}\Q(f),
 \end{equation}
where $f:=f(t,x,v)$ is the time-dependent particles distribution
function in the phase space. The parameter $\varepsilon>0$ is the 
dimensionless Knudsen number defined as the  ratio of the mean free path over
a typical length scale such as the size of the spatial domain, which
measures if the gas is rarefied. The Boltzmann collision operator $\Q$
is a quadratic operator local in $(t,x)$. The time $t$ and position $x$
act only as parameters in $\Q$ and therefore will be omitted in
its description
 \begin{equation} \label{eq:Q2}
 \Q (f)(v) = \int_{v_\star \in \RR^d}
 \int_{\sigma \in \ens{S}^{d-1}}  B(|v-v_\star|, \cos \theta) \,
 \left( f^\prime_\star f^\prime - f_\star f \right) \, d\sigma \, dv_\star.
 \end{equation}
We used the shorthand $f = f(v)$, $f_\star = f(v_\star)$,
$f^\prime = f(v^\prime)$, $f_\star^\prime = f(v_\star^\prime)$. The
velocities of the colliding pairs $(v,v_\star)$ and
$(v^\prime,v^\prime_\star)$ are related by
 \begin{equation*}
\left\{
\begin{array}{l}
\displaystyle{ v^\prime   = v - \frac{1}{2} \big((v-v_\star) 
- |v-v_\star|\,\sigma\big),} 
\vspace{0.2cm} \\
\displaystyle{ v^\prime_\star = v - \frac{1}{2} \big((v-v_\star) + |v-v_\star|\,\sigma\big),}
\end{array}\right.
 \end{equation*}
 with $\sigma\in \ens{S}^{d-1}$.  The collision kernel $B$ is a
 non-negative function which by physical arguments of invariance only
 depends on $|v-v_\star|$ and $\cos \theta = {u} \cdot \sigma$, where
 $u = (v-v_\star)$ and $\hat u = u /\vert u \vert$ is the normalized relative
 velocity. 
In this work we are concerned with {\em short-range interaction} models and we assume that $B$ is locally integrable. These assumptions are satisfied for the so-called {\em hard spheres  model} $B(u,\cos\theta)=|u|$, and it is known as {\em Grad's angular cutoff
  assumption} when it is (artificially) extended to interactions deriving from a power-law potentials. As an important benchmark model for the numerical simulation we therefore consider in this paper the so-called {\em variable hard spheres model} (VHS), which writes
\begin{equation}
\label{VHSkernel}
B(u, \cos \theta) = C_\gamma \, |u|^\gamma,
\end{equation}
for some $\gamma \in (0,1]$ and a constant $C_\gamma >0$.

Boltzmann's collision operator has the fundamental properties of
conserving mass, momentum and energy
 \begin{equation*}
 \int_{{\RR}^d}\Q(f) \, \left(\begin{array}{l}1\\v\\|v|^2\end{array}\right)\,dv = 0,
 \end{equation*}
and it satisfies well-known Boltzmann's $H$ theorem 
 \begin{equation*}
\frac{d H}{dt}(t) \,:=\, -\frac{d}{dt}\int_{{\RR}^d} f \log f \, dv = - \int_{{\RR}^d} \Q(f)\log(f) \, dv \geq 0,
 \end{equation*}
where the functional $H$ is called the {\em entropy} of the solution. Boltzmann's $H$ theorem implies that any equilibrium distribution function, {\em i.e.}, any function which is a maximum of the entropy, has the form of a locally Maxwellian distribution
 \begin{equation*}
 \M[\rho,u,T](v)\,\,=\,\,\frac{\rho}{(2\,\pi\, k_B\;T)^{d/2}}
 \exp \left( - \frac{\vert u - v \vert^2} {2\,k_B\,T} \right), 
 \end{equation*}
where  $k_B$ is the Boltzmann constant,  $\rho,\,u,\,T$ are the {\em density}, {\em macroscopic velocity}
and {\em temperature} of the gas, defined by
 \begin{equation}
\label{field}
 \rho = \int_{v\in{\RR}^d}f(v)\,dv, \quad u =
 \frac{1}{\rho}\int_{v\in{\RR}^d}v\,f(v)\,dv, \quad T = {1\over{d\rho}}
 \int_{v\in{\RR}^d}\vert u - v \vert^2\,f(v)\,dv.
 \end{equation}
For further details on the physical background and derivation of
the Boltzmann equation we refer to Cercignani, Illner, Pulvirenti~\cite{Cerc}
and Villani~\cite{Vill:hand}.

In order to define completely the mathematical problem for equation 
(\ref{eq:Q}) suitable boundary conditions on $\partial \Omega$ should
be considered. The most simple model for these is due to Maxwell \cite{Maxwell}, in which it is assumed that the fraction $(1 - \alpha)$ of the emerging particles has been reflected elastically at the wall, whereas the remaining fraction $\alpha$ is thermalized and leaves the wall in a Maxwellian distribution. The parameter $\alpha$ is called accommodation coefficient \cite{Cercignani}.

More precisely, we consider equation (\ref{eq:Q}) supplemented with the following boundary conditions for $x\in \partial \Omega$. The smooth boundary $\partial \Omega$ is assumed to have a unit outer normal $n(x)$ at every $x \in \partial \Omega$  and for  $v \cdot n(x) \ge 0$, we assume that at the solid boundary a fraction $\alpha$ of particles is absorbed by the wall and then re-emitted with the velocities corresponding to those in a still gas at the temperature of the solid wall, while the remaining portion $(1-\alpha)$ is perfectly reflected. This is equivalent to impose for the ingoing velocities
\begin{equation}
f(t,x,v)\,\,=\,\,(1-\alpha)\,\R f(t,x,v)\,\,+\,\,\alpha\, \M\, f(t,x,v),\quad x \in\partial\Omega, \quad
v \cdot n(x) \ge 0,
\label{eq:BOU}
\end{equation}
with $0\leq \alpha\leq 1$ and
\begin{equation}
\left\{
\begin{array}{lll}
\displaystyle\R f(t,x,v)&=& f(t,x,v\,-\,2\,(n(x) \cdot v)\,n(x)), 
\\
\,
\\
\displaystyle\M f(t,x,v)&=&\mu(t,x)\; f_w(v).
\end{array}\right.
\label{eq:MAX}
\end{equation}
If we denote by $k_B$ the Boltzmann's constant and by $T_w$ the temperature of the solid boundary, $f_w$ is given by
$$
f_w(v) \,:=\, \exp \left(-\frac{v^2} {2k_BT_w}\right),
$$
and the value of $\mu(t,x)$ is determined by mass conservation at the surface of the wall for any $t\in\R^+$ and $x\in\partial\Omega$
\begin{equation} 
\mu(t,x) \,\int_{v \cdot n(x) \geq 0}f_{w}(v) \,v \cdot n(x)\, dv \,\,=\,\,
-\int_{v \cdot n(x) < 0}f(t,x,v)\,v \cdot n(x) \, dv.
\label{eq:MU}
\end{equation}

Hence, we have
\begin{proposition}
\label{prop:1}
Assume that $f$ is a smooth solution to the Boltzmann equation (\ref{eq:Q}) with boundary conditions (\ref{eq:BOU})-(\ref{eq:MU}). Then we have for any $x\in\partial\Omega$
\begin{equation}
\label{c:1}
\left\{
\begin{array}{l}
\displaystyle\int_{v \cdot n(x) \ge 0}\R f(t,x,v)\,v \cdot n(x) \, dv \,\,=\,\, -\int_{v \cdot n(x) < 0} f(t,x,v)\,v \cdot n(x) \, dv,
\\
\,
\\
\displaystyle\int_{v \cdot n(x) \ge 0}\R f(t,x,v)\,v \cdot \tau(x) \, dv \,\,=\,\, +\int_{v \cdot n(x) < 0} f(t,x,v)\,v \cdot \tau(x) \, dv,
\end{array}\right.
\end{equation}
where $\tau(x)$ belongs to the hyperplane orthogonal to $n(x)$, and 
\begin{equation}
\label{c:2}
\int_{v \cdot n(x) \ge 0}\M f(t,x,v)\,v \cdot n(x) \, dv \,\,=\,\, -\int_{v \cdot n(x) < 0} f(t,x,v)\,v \cdot n(x) \, dv.
\end{equation}
Both equalities (\ref{c:1}) and (\ref{c:2})guarantee the global conservation of mass.
\end{proposition}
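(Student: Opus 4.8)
The plan is to handle the three identities separately: the two displayed in (\ref{c:1}) both rest on a single change of variables, while (\ref{c:2}) is an immediate consequence of the definition of $\mu$ through (\ref{eq:MU}). The central object is the map $S_x : v \mapsto v - 2\,(n(x)\cdot v)\,n(x)$ appearing in the definition of $\R f$ in (\ref{eq:MAX}), which is the orthogonal reflection across the tangent hyperplane $\{v : v\cdot n(x)=0\}$. First I would record its three relevant properties: it is an involution ($S_x\circ S_x = \mathrm{Id}$); it is a linear isometry, so $|\det S_x|=1$ and Lebesgue measure is preserved under the substitution $w=S_x(v)$; and it acts on the normal and tangential components by $S_x(v)\cdot n(x) = -(v\cdot n(x))$ and $S_x(v)\cdot \tau(x) = v\cdot \tau(x)$ for any $\tau(x)\perp n(x)$, the cross term $2(n\cdot v)(n\cdot\tau)$ vanishing in the latter.

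For the first identity in (\ref{c:1}) I would substitute $w=S_x(v)$ in the left-hand side. Since $\R f(t,x,v)=f(t,x,S_x(v))$, the integrand becomes $f(t,x,w)$; the half-space $\{v\cdot n\ge 0\}$ maps onto $\{w\cdot n<0\}$ up to the measure-zero set $\{v\cdot n=0\}$; and the weight $v\cdot n$ becomes $-(w\cdot n)$. Collecting these gives exactly $-\int_{w\cdot n<0} f(t,x,w)\,(w\cdot n)\,dw$. The second identity follows from the very same substitution, the only change being that the tangential component is now invariant, so the weight $v\cdot\tau$ becomes $+(w\cdot\tau)$ and the sign is preserved rather than reversed. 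This is precisely why the two lines of (\ref{c:1}) carry opposite signs.

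For (\ref{c:2}) no change of variables is needed. By the definition $\M f(t,x,v)=\mu(t,x)\,f_w(v)$ in (\ref{eq:MAX}), the left-hand side equals $\mu(t,x)\int_{v\cdot n\ge 0} f_w(v)\,(v\cdot n)\,dv$, and this is exactly the quantity that (\ref{eq:MU}) equates to $-\int_{v\cdot n<0} f(t,x,v)\,(v\cdot n)\,dv$, giving the claim. To then deduce global mass conservation, I would split $\int_{\RR^d} f(t,x,v)\,(v\cdot n)\,dv$ into the incoming and outgoing half-spaces, replace $f$ on the incoming part $\{v\cdot n\ge 0\}$ by the boundary condition (\ref{eq:BOU}), and apply the first identity of (\ref{c:1}) weighted by $(1-\alpha)$ together with (\ref{c:2}) weighted by $\alpha$; the factors $(1-\alpha)+\alpha=1$ recombine and the incoming contribution collapses to $-\int_{v\cdot n<0} f\,(v\cdot n)\,dv$, so the total normal flux vanishes at each point of $\partial\Omega$.

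I do not expect a genuine obstacle: the argument is elementary once $S_x$ is recognized as a measure-preserving involution exchanging the two half-spaces. The only point demanding care is the bookkeeping of the two sign conventions, namely that the normal component is reversed while tangential components are left invariant, which is exactly what distinguishes the two equalities in (\ref{c:1}).
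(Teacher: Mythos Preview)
Your proposal is correct and follows essentially the same route as the paper: the paper also proves (\ref{c:2}) directly from the defining relation (\ref{eq:MU}) and obtains both lines of (\ref{c:1}) by the single change of variables $v^* = v - 2(v\cdot n(x))\,n(x)$ applied to $\int_{v\cdot n\ge 0}\R f\,\eta(v)\,dv$, then specializing $\eta(v)=v\cdot n(x)$ and $\eta(v)=v\cdot\tau(x)$. Your write-up is slightly more explicit about the properties of the reflection $S_x$ and about how the mass-conservation statement follows, but the argument is the same.
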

\begin{proof}
First the equality (\ref{c:2}) is straightforward by construction of the constant $\mu(t,x)$. 

Then, to prove (\ref{c:1}) for any $x\in\partial\Omega$, we multiply $\R f(t,x,f)$ by a function $\eta(v)$ in (\ref{eq:MAX}) and integrate on the set $\{v\in\RR^d,\,\, (v\cdot n(x)\ge 0 \}$. Applying the change of variable $v^* = v  - 2(v\cdot n(x))\,n(x)$, it yields
$$
\int_{v \cdot n(x) \ge 0}\R f(t,x,v)\,\eta(v) \, dv  = \int_{v^* \cdot n(x) \le 0}f(t,x,v^*)\,\eta\left(v^*\,-\,2(v^*\cdot n(x))\,n(x)\right) \, dv^*.
$$ 
Taking respectively $\eta(v)= v\cdot n(x)$ and $\eta(v)= v\cdot \tau(x)$ we get the result.
\end{proof}
The construction of approximate methods of solution to the nonstationary Boltzmann equation is an important problem in unsteady rarefied flows. The mathematical difficulties related to the structure of the Boltzmann equation make it extremely difficult in most physically relevant situations. For such reasons realistic numerical simulations are based on
Monte-Carlo techniques. The most famous examples are the {Direct
Simulation Monte-Carlo (DSMC)} methods by Bird~\cite{bird} and by
Nanbu~\cite{Na}. These methods guarantee efficiency and
preservation of the main physical properties. However, avoiding
statistical fluctuations in the results becomes extremely
expensive in presence of non-stationary flows or close to
continuum regimes.

More recently a new class of numerical methods based on the use of
spectral techniques in the velocity space has been developed by L. Pareschi \& B. Perthame \cite{PePa:96}. The methods were first derived in~\cite{PePa:96}, inspired from spectral methods in fluid mechanics~\cite{CHQ:88} and by previous works on the use of Fourier transform techniques for the Boltzmann  equation (see~\cite{Boby:88} for instance). The numerical method is based on approximating the distribution function by a periodic function
in the phase space, and on its representation by Fourier series.
The resulting Fourier-Galerkin approximation can be evaluated with
a computational cost of $O(n^{2})$ (where $n$ is the total number
of discretization parameters in velocity), which is lower than
that of previous deterministic methods (but still larger then that
of Monte-Carlo methods). It was further developed by L. Pareschi \& G. Russo in \cite{PaRu:spec:00,PaRu:stab:00} where evolution equations for the Fourier modes were explicitly derived
and spectral accuracy of the method has been proven. Strictly
speaking these methods are not conservative, since they preserve
mass, whereas momentum and energy are approximated with spectral
accuracy. This trade off between accuracy and conservations seems
to be an unavoidable compromise in the development of numerical
schemes for the Boltzmann equation (with the noticeably exception of \cite{Pa:new}).

We recall here that the spectral method has been applied also to
non homogeneous situations~\cite{FiRu:FBE:03, FiRu:04}, to the
Landau equation~\cite{FiPa:02, PaRuTo:00}, where fast algorithms
can be readily derived, and to the case of granular
gases~\cite{NaGiPaTo:03, FiPa:03}. For a recent introduction to
numerical methods for the Boltzmann equation and related kinetic
equations we refer the reader to~\cite{DPR}. Finally let us
mention that A.~Bobylev \& S.~Rjasanow \cite{BoRj:HS:97,BoRj:HS:99}
have also constructed fast algorithms based on a Fourier transform
approximation of the distribution function. In \cite{MP:03}, C.  Mouhot \& L. Pareschi proposed a fast spectral method was proposed for a class of particle interactions including pseudo-Maxwell
molecules (in dimension $2$) and hard spheres (in dimension $3$),
on the basis of the previous spectral method together with a
suitable semi-discretization of the collision operator. This
method permits to reduce the computational cost from $O(n^2)$ to
$O(n\log_2 n)$ without loosing the spectral accuracy, thus making
the method competitive with Monte-Carlo \cite{MP:03,FMP}. The principles and basic
features of this method will be presented in the next sections.

Few works are devoted to the numerical simulation to the Boltzmann equation for a nonhomogeneous gas. We mention for instance the early work of G. Russ and the author using a spectral approximation with a computational cost of $N^2$ , where $N$ is the number of freedom in velocity and a time relaxed scheme \cite{FiRu:FBE:03}. More recenlty, I. Gamba \& S. H. Tharkabhushanam proposed some numerical simulations for a shock tube problem using deterministic problems \cite{GT09, GT10}.

The goal of this paper is to propose an efficient algorithm for the approximation to the time evolution Boltzmann equation in a bounded physical domain $\Omega\subset\RR^d$ supplemented with different boundary conditions. Moreover, we apply a specific  time discretization based on asymptotic preserving scheme allowing to deal rarefied regime as well as its hydrodynamic limit. We will treat several problems for the study of the long time behavior of the solution to the Boltzmann equation : trend to equilibrium, Poiseuille flows, ghost effects.

The plan of the paper is the following. In the next sections we recall the main ingredients for the approximation  of the Boltzmann equation : a Fourier-Galerkin method fot the Boltzmann operator \cite{PaRu:spec:00,FiRu:FBE:03, FiRu:04,MP:03,FMP}, a second order finite volume scheme for the transport \cite{FSB} and finally a stable scheme for the time discretization allowing to treat a wide range of regimes (from rarefied to hydrodynamic) \cite{FJ}.  Section~\ref{sec4} is devoted to numerical and for one and two dimensional, time dependent and stationary problems.  Finally, in the last section we draw conclusions.

\section{General framework for the discretization of the Boltzmann operator} 
\label{sec2}
\setcounter{equation}{0}

We consider the spatially homogeneous Boltzmann operator written in
the following general form
\begin{equation}
\Q(f) = \int_{\mathcal{C}} \mathcal{B}(y,z)\, \big[
f^\prime  f_\star^\prime - f_\star f \big] \,dy \,dz,\quad v\in\mathbb{R}^d,
\label{eq:Qgen}
\end{equation}
with
 \begin{equation*}
 v^\prime   = v + \Theta^\prime(y,z), \qquad
 v^\prime_\star = v + \Theta^\prime_\star(y,z),  \qquad  v_\star = v +
 \Theta_\star(y,z).
 \end{equation*}

In the equations above, $\mathcal{C}$ is some given (unbounded)
domain for $y,z$, and $\Theta$, $\Theta^\prime$, $\Theta^\prime _\star$ are suitable
functions, to be defined later. This general framework emphasizes
the translation invariance property of the collision operator,
which is crucial for the spectral methods. We will be more precise in the
next paragraphs for some changes of variables allowing to reduce
the classical operator (\ref{eq:Q2}) to the form (\ref{eq:Qgen}).

In this section we remind the basic principles leading to Fourier-Galerkin approximation of the Boltzmann operator, the method is based on the following three steps: 
\begin{itemize}
\item[{\bf 1)}] periodized truncations of the Boltzmann collision operator $\Q(f)$,
\item[{\bf 2)}] expansion of the ditribution  function in a truncated Fourier series of degree $N=(n,\ldots,n)\in\NN^d$,
\item[{\bf 3)}] projection of the quadratic operator in the set of trigonometric polynomial of degree $N$.
\end{itemize}

\subsection{Periodized truncations of the Boltzmann collision operator}
\label{sec2-1} 
Any deterministic numerical method requires to work on a {\it bounded} velocity space.  Here, the idea only consists in adding some non physical binary collisions by {\em periodizing} the function and the collision operator. This implies the loss of some local invariants (some non physical collisions are added). Thus the scheme is not conservative anymore, although it still preserves the mass if the periodization is done carefully. However in this way the structural properties of the collision operator are maintained and thus they can be exploited to derive fast algorithms. This periodization is the basis of spectral  methods and we shall discuss below this  non physical truncation (associated with limit conditions) of this velocity space.

Let us consider the space homogeneous Boltzmann equation in a bounded domain in velocity $\mathcal{D}_L = [-L,L]^d$ with $0<L<\infty$. We truncate the integration in $y$ and $z$ in \eqref{eq:Qgen} since periodization would yield infinite result if not: we set $y$ and $z$ to belong to some truncated domain $\mathcal{C}_R \subset \mathcal{C}$ (the parameter $R$ refers to its size and will be defined later).

Then the {\em truncated} collision operator reads
\begin{equation}\label{eq:HOMtruncat}
\Q^R(f) = \int_{\mathcal{C}_R} \mathcal{B}(y,z) \, 
\big( f^\prime_\star\,f^\prime \,-\, f_\star\,f \big) \,dy \, dz,
\end{equation}
for $v \in \mathcal{D}_L$ (the expression for $v \in\RR^d$ is deduced
by periodization).
By making some changes of variable on $v$, one can easily prove for
the two choices of variables $y,z$ of the next subsections, that for
any function $\varphi$ periodic on $\mathcal{D}_L$ the following weak
form is satisfied:
\begin{equation}
\label{eq:QRweak} 
\int_{\mathcal{D}_L} \Q^R(f) \,\varphi(v) \,dv =
\frac{1}{4}\int_{\mathcal{D}_L}\int_{\mathcal{C}_R}
\mathcal{B}(y,z) \,f_\star\, f \, 
\left( \varphi^\prime_\star + \varphi^\prime - \varphi_\star - \varphi  \right)\, dy \,dz \, dv.
\end{equation}

Now, we use the representation $\Q^R$ to derive spectral methods.

\subsection{Expansion of the distribution function $f$}
\label{sec2-2}
Hereafter, we use just one index to denote the $d$-dimensional sums
with respect to the vector $k=(k_1,..,k_d)\in \ZZ^d$, hence we set
$$
\sum_{k=-N}^N := \sum_{k_1,\dots,k_d=-N}^N.
$$
The approximate function $f_N$ is represented as the truncated Fourier
series
\begin{equation}
f_N(v) = \sum_{k=-N}^N \hat{f}_k \, e^{i \frac{\pi}L k \cdot v},
\label{eq:FU}
\end{equation}
with the Fourier coefficient $\hat{f}_k$ given by
$$
\hat{f}_k = \frac{1}{(2 L)^d}\int_{\mathcal{D}_L} f(v) \, 
e^{-i \frac{\pi}L k \cdot v }\,dv.
$$
In a Fourier-Galerkin method the fundamental unknowns are the
coefficients $\hat{f}_k(t)$, $\,k=-N,\ldots,N$. We obtain a set of
ODEs for the coefficients $\hat{f}_k$ by requiring that the residual
of (\ref{eq:HOMtruncat}) be orthogonal to all trigonometric
polynomials of degree less than $\|N\|_\infty$.  Hence for $k=-N,\ldots,N$
\begin{equation*}
\int_{\mathcal{D}_L}
\left(\frac{\partial f_N}{\partial t} - \Q^R(f_N)
\right)
e^{-i \frac{\pi}L k \cdot v}\,dv = 0.
\end{equation*}
By substituting expression (\ref{eq:FU}) in (\ref{eq:QRweak}) we get
\begin{eqnarray}
\label{qm}
\Q^{R}(f_N) &=& \sum_{l=-N}^N\,\sum_{m=-N}^N \left[ \beta (l,m)  - \beta (m,m) \right]\, 
\hat{f}_l\,\hat{f}_m \, e^{i \frac{\pi}L
(l+m) \cdot v},
\end{eqnarray}
where the so-called {\em kernel modes} $\beta$ are defined by
\begin{equation}
\label{beta}
\beta (l,m) = \int_{\mathcal{C}_R}
\mathcal{B}(y,z)\, e^{i \frac{\pi}L \big(l\cdot \Theta^\prime(y,z) + m\cdot \Theta_\star^\prime(y,z)\big)}  \,dy \,dz.
\end{equation}

\subsection{Projection of the quadratic operator $\Q^R(f_N)$}
\label{sec2-3}
The {\em spectral equation} is the projection of the collision
equation in $\PP_N$, the $(2N + 1)^d$-dimensional vector space of
trigonometric polynomials of degree at most $N$ in each direction,
{\it i.e.},
$$
\frac{\partial f_N}{\partial t} =\mathcal{P}_N\,\Q^R(f_N),
$$
where $\mathcal{P}_N$ denotes the orthogonal projection on $\PP_N$ in
$L^2(\mathcal{D}_L)$.
 A straightforward computation leads to the following set of ordinary differential equations on the Fourier coefficients
\begin{equation*}
\frac{\partial \hat{f}_k}{\partial t}
= \sum_{{l+m=k}\atop{l,m=-N}}^N \left[ \beta(l,m)\,-\, \beta(m,m)\right]\,\hat{f}_{l}\,\hat{f}_m, \quad -N \leq k\leq N. 
\end{equation*}

\subsection{Application I: the classical spectral method}
\label{sec2-4}
In the classical spectral method \cite{PaRu:spec:00}, a simple change
of variables in (\ref{eq:Q2}) permits to write
\begin{equation}
\Q(f) = \int_{\RR^d} \int_{\ens{S}^{d-1}}
\mathcal{B}^{\rm c} (u, \sigma)\big( f (v^\prime) f (v_\star^\prime)- f (v) f(v_\star) \big)  \,d\sigma\,du,
\label{eq:G}
\end{equation}
with $u = v - v_\star \in \RR^d$, $\sigma \in \ens{S}^{d-1}$, and
\begin{equation}
\left\{
\begin{array}{l}
v^{\prime} = v \,-\, \frac{1}{2}(u-\vert u\vert \sigma ), \vspace{0.3cm} \\
v_\star^{\prime} = v \,-\, \frac{1}{2}(u+\vert u\vert \sigma), \vspace{0.3cm} \\
v_\star = v \,+\,u.
\end{array}
\right.
\label{eq:VV2}
\end{equation}
Then, we set $ \mathcal{C} := \RR^d \times \ens{S}^{d-1}$ and
$$
\Theta^\prime(u,\sigma) :=  - \frac{1}{2}(u-\vert u\vert \sigma ), \quad  
\Theta^\prime _\star(u,\sigma) := - \frac{1}{2}(u+\vert u\vert \sigma), \quad
\Theta_\star(u,\sigma) := u.
$$
Finally the collision kernel $\mathcal{B}^{\rm c}$ is defined by 
\begin{equation}\label{eq:defBclassic}
\mathcal{B}^{\rm c} (u,\sigma) = B\big(|u|,2 (\hat{u}\cdot \sigma)^2 -1 \big),\quad{\rm with }\quad \hat u = \frac{u}{\vert u\vert}.
\end{equation}

Thus, the Boltzmann operator (\ref{eq:G}) is now written in the
form~(\ref{eq:Qgen}).  Therefore, we consider the bounded domain $\mathcal{D}_L = [-L,L ]^d$,  for the distribution $f$, and the bounded domain
$\mathcal{C}_R = \B_R \times \ens{S}^{d-1}$ for some $R>0$.  The
truncated operator reads in this case
\begin{equation*}
\Q^R(f)(v)=\int_{\B_R\times \ens{S}^{d-1}} \mathcal{B}^{\rm c}
(u,\sigma) \,
\big( f(v'_*) f(v') - f(v_*) f(v) \big) \, d\sigma \, du.
\end{equation*}

Then, we apply the spectral algorithm (\ref{qm}) and get the following {\em kernel modes} $\beta^c (l,m)$
\begin{equation*}
\beta^c (l,m) = \int_{\B_R} \int_{\ens{S}^{d-1}} B(|u|, \cos\theta) \, e^{-i \frac{\pi}L \big( u\cdot\frac{(l+m)}{2} - i \vert u\vert\sigma \cdot \frac{(m-l)}{2}\big)} \,d\sigma\,du.
\end{equation*}
We refer to \cite{PaRu:spec:00,FiRu:04} for the explicit computation of Fourier coefficients $\beta^c (l,m)$ in the VHS case where $B$ is given by \eqref{VHSkernel}. 

\subsection{Application II: the fast spectral method}
\label{sec2-5}
Here we shall approximate the collision operator starting from a
representation which conserves more symmetries of the collision
operator when one truncates it in a bounded domain.  This
representation was used in \cite{BoRj:HS:97,ibraRj} to derive finite
differences schemes and it is close to the classical Carleman
representation (cf. \cite{carl}). Hence, the collision operator (\ref{eq:Q2}) can be written
as
\begin{equation}
\label{eq:Qnew}
\Q(f)(v) = \int_{\RR^d\times\RR^d} \mathcal{B}^{\rm f} (y,
z) \,\delta(y \cdot z) \big[ f(v + z) f (v + y)- f (v + y + z) f(v)\big] \, dy \, dz,
\end{equation}
with
$$
\mathcal{B}^{\rm f} (y, z)= 2^{d-1} \, B\left(|y+z|,
  -\frac{y\cdot(y+z)}{|y|\,|y+z|} \right) \,|y+z|^{-(d-2)}.
$$
Thus, the collision operator is now written in the form (\ref{eq:Qgen})
with $\mathcal{C} := \RR^d \times \RR^d$, 
$$
\mathcal{B}(y,z)=\mathcal{B}^{\rm f} (y,z)\,\delta(y\cdot z),
$$
and 
$$
v^\prime_\star = v + \Theta^\prime_\star(y,z),\quad v^\prime = v +
\Theta^\prime(y,z),\quad v_\star=v + \Theta_\star(y,z),
$$
with
$$
\Theta^\prime_\star(y,z) := z,\quad \Theta^\prime(y,z):= y,\quad
\Theta_\star(y,z):= y+z.
$$

Now we consider the bounded domain $\mathcal{D}_L = [-L,L ]^d$, ($0<L
<\infty$) for the distribution $f$, and the bounded domain
$\mathcal{C}_R = \B_R \times \B_R$ for some $R>0$. The (truncated)
operator now reads
\begin{equation}
\label{eq:QRfast}
\Q^R(f)(v)=\int_{\mathcal{C}_R} \mathcal{B}^{\rm f} (y,z)\,\delta(y \cdot z)\,
\big( f(v+z) f(v+y) - f(v+y+z) f(v) \big) \, dy \, dz,
\end{equation}
for $v \in \mathcal{D}_L$. This representation of the collision kernel
yields better decoupling properties between the arguments of the
operator and allows to lower significantly the computation cost of the
method by using the fast Fourier transform (see~\cite{MP:03,FMP}). 

\begin{remark}
Let us make a crucial remark about the choice of $R$. When $f$ has
support included in $\B_S$, $S>0$, it is usual (see
\cite{PaRu:spec:00,MP:03}) to search for the minimal period $L$ (in
order to minimize the computational cost) which prevents interactions
between different periods of $f$ during {\em one} collision process.
From now on, we shall always assume that we can take $L$ and $R$ large
enough such that, when needed, $R \ge \sqrt 2 \, L$. Hence all the
torus is covered (at least once) in the integration of the variables
$(g,\omega)$ or $(y,z)$.
\end{remark}

From now, we can apply the spectral
algorithm (\ref{qm})  to this collision operator and the corresponding kernel modes are given by
$$
\beta^f (l,m) = \int_{y\in B_R}\int_{z\in B_R} {\cal B}^{\rm f}(y,z)\, \delta(y\cdot z) \,
e^{i \frac{\pi}T \, \big( l\cdot y + m\cdot z \big)} \, dy \, dz.
$$
In the sequel we shall focus on $\beta^f$, and one easily checks
that $\beta^f(l,m)$ depends only on $|l|$, $|m|$ and $|l \cdot
m|$.

Now  we consider the case of Maxwellian molecules in dimension $d=2$, and hard spheres in dimension $d=3$ (the most relevant kernel for applications) for which $\mathcal{B}^f$ is constant.
Let us describe the method in dimension $d = 3$  (see \cite{MP:03} for other dimensions and more general interactions).

First we change to spherical coordinates
$$
\beta^f (l,m) =\frac{1}{4} \int_{e \in S^2} \int_{e^\prime \in S^2} \delta(e \cdot e^\prime) \left[ \int_{-R}^{R} |\rho|  \, e^{i \frac{\pi}T \rho (l\cdot e)} \, d\rho\right]\,
\left[\int_{-R}^{R} |\rho^\prime|  \, e^{i \frac{\pi}T \rho^\prime (m \cdot e^\prime)}
\, d\rho^\prime \right] \, de \, de^\prime
$$
and then we integrate first $e^\prime$ on the intersection of the
unit sphere with the plane $e^\perp$
$$
\beta^f(l,m) = \frac{1}{4}\int_{e\in S^2}\phi^3_{R}(l\cdot e) \,
\left[\int_{e^\prime\in S^2 \cap e^\perp}\phi^3_{R}(m \cdot e^\prime) \, de^\prime\right] \, de,
$$
where
$$
\phi^3_{R}(s) = \int_{-R}^R |\rho| e^{i\frac{\pi}T \rho s} \, d\rho.
$$
Thus we get the following decoupling formula with two degrees of freedom
$$
\beta_R(l,m) = \int_{e\in S^2_+}\phi^3_{R}(l\cdot e) \,\psi^3_{R}(\Pi_{e^\perp}(m)) \, de,
$$
where $S^{2}_{+}$ denotes the half-sphere and
$$
\psi^3_{R}(\Pi_{e^\perp}(m)) = \int_0^\pi \phi_{R}^3\big(|\Pi_{e^\perp}(m)| \,  cos\theta\big) \, d\theta,
$$
and $\Pi_{e^\bot}$ is the orthogonal projection on the plane $e^\bot$. Then, we can compute the functions $\phi^3_{R}$ and $\psi_R^3$
$$
\phi^3_R(s) = R^2\big( 2 \Sinc(R s) - \Sinc^2(Rs/2)\big), 
\quad \psi^3_R(s) = \int_0^\pi \phi^3_R \big(s \, cos \theta \big) \, d\theta.
$$
Now the function $e \mapsto \phi_{R}^3(l\cdot
e)\,\psi^3_{R}(\Pi_{e^\perp}(m))$ is periodic on $S_2^+$. Taking
a spherical parametrization $(\theta,\varphi)$ of $e \in S^2_+$
and taking for the set $\mathcal{A}$ uniform grids of respective
size $M_1$ and $M_2$ for $\theta$ and $\varphi$ we get
$$
\beta^f (l,m) \simeq \frac{\pi^2}{M_1\,M_2}\sum_{p,q=0}^{M_1,M_2}\alpha_{p,q}(m) \,\alpha^\prime_{p,q}(l).
$$
\section{Space and time discretization}
\label{sec3}
In this section, we first focus on the discretization of the transport step and omit for sake of clarity the collisional operator and the velocity variable is fixed $v_l=l\Delta v\in\RR^d$, with $l=(l_1,\cdots,l_d)\in\ZZ^d$
\begin{equation}
\label{eq:transp}
\left\{
\begin{array}{l}
\displaystyle\frac{\partial f}{\partial t} \,+\, v\,\cdot\nabla_x f \,=\, 0, \,\, \forall\,(t,x)\,\in\RR^+\times \Omega, 
\\
\,
\\
f(t=0,x,v) = f_0(x,v),\,\, x\in\Omega,\,\, v\in\RR^d.
\end{array}
\right.
\end{equation}
We shall develop the schemes in the context of Finite Volume methods for the approximation of the transport part.  We consider $\T$ a mesh of the space domain $\Omega\subset \RR^d$. For any control volume $T_i\in\T$ we denote by $\N(i)$ the set of the neighbours of $i$. If $j\in\N(i)$, $\sigma_{i,j}$ is the common interface between $T_i$ and $T_j$ and $n_{\sigma_{i,j}}$  is the unit normal vector to $\sigma_{i,j}$ oriented from $T_i$ to $T_j$ and we have $n_{\sigma_{i,j}}=-n_{\sigma_{j,i}}$. Let $m_i$ be the Lebesgue measure of the control volume $T_i$, $\Delta t>0$ be the time step and $t^n=n\,\Delta t$. We integrate the transport equation (\ref{eq:transp}) over the control volume $[t^n,t^{n+1}]\times T_i$ and get
\begin{equation}
\label{schema:vf1}
\left\{
\begin{array}{l}
\displaystyle m_i \frac{f_i^{n+1}(v_l) - f_i^n(v_l)}{\Delta t } + \sum_{j\in\N(i)} \F^n(v_l,\sigma_{i,j})  = 0, \quad T_i\in\T, \,\,n\in\NN,
\\
\,
\\
\displaystyle f_i^0(v_l) = \frac{1}{m_i} \int_{T_i} f_0(x,v) dx, 
\end{array}
\right.
\end{equation}    
where $\F^n(v_l,\sigma_{i,j})$ represents a numerical flux on  $[t^n,t^{n+1}]\times\sigma_{i,j}$  
$$
\F^n(v_l,\sigma_{i,j})  = m(\sigma_{i,j})\, \left( v\cdot n_{\sigma_{i,j}}\right)\, f_{i,j}^n(v_l)
$$
and $f_{i,j}^n$ is an approximation of the edge-based fluxes  between times $t^n$ and $t^{n+1}$. This formula defines a class of finite volume schemes. For instance a second order upwind scheme is obtained by taking 
$$
 f_{i,j}^n(v_l) = 
\left\{
\begin{array}{l}
f_i^n(v_l) \,+\, \delta_{i,j} \,\left(f_{j}^n(v_l)  - f_{i}^n(v_l)\right), \textrm{ if\,\,} \left( v\cdot n_{\sigma_{i,j}}\right) \,>\, 0,
\\
\,
\\
f_{j}^n(v_l)  \,+\, (1-\delta_{i,j})\, \left(f_{i}^n(v_l)  - f_{j}^n(v_l)\right),\textrm{ else,}
\end{array}
\right.
$$
 where $\delta_{i,j}$ defines a slope limiter such that for all $k\in\N(i)$ there exists $0<\beta_{i,k}<1$ satisfying 
$$
\left\{
\begin{array}{l}
\displaystyle 0\leq \sum_{\substack{k\in\N(i),\\ k\neq j}} \beta_{i,k} \leq 1,
\\
\,
\\
\displaystyle\delta_{i,j}\,\left( f_j^n(v_l)- f_i^n(v_l)\right) = \sum_{\substack{k\in\N(i),\\k\neq j}} \beta_{i,k}\left(f_{i}^n(v_l) - f_k^n(v_l) \right).
\end{array}\right.
$$
\subsection{Boundary conditions}
\label{sec3-1}
The most difficult part in the actual implementation of finite volume methods for kinetic equations is that of boundary conditions. We will discuss here the situation of solid walls.

Let $\sigma_i$ be an edge of the control volume $T_i$, with $\sigma_i\in\partial \Omega$ and let us denote by $n(\sigma_i)$ a unit outer vector to the edge $\sigma_i$. We define the boundary solution on $\sigma_i$ by 

\begin{equation}
f_{\sigma_i}^n(v_l) = 
\begin{cases} 
(1-\alpha)\,\R \,f_{\sigma_i}^n(v_l)\,\,+\,\,\alpha \,\M\, f_{\sigma_i}^n(v_l), {\rm\,\, if\,\,} v \cdot n(\sigma_i) \geq 0, 
\\
\,
\\
\displaystyle f_i^n(v_l), {\rm \,\,if\,\,}  v \cdot n(\sigma_i) < 0,
\end{cases} 
\label{eq:BF} 
\end{equation} 
where $\R f_{\sigma_i}^n(v_l)$ and $\M f_{\sigma_i}^n(v_l)$ are defined by (\ref{eq:MAX}). On the one hand, we compute an approximation of the operator describing specular reflection $\R f_{\sigma_i}^n(v_l)$, it gives for $v\cdot n(\sigma_i)\ge 0$,
$$
\R^* f_{\sigma_i}^n(v_l) = f_{\sigma_i}^n(v^*), \quad{\rm with\,\, }  v^* \,=\, v_l - 2(v_l\cdot n(\sigma_i))\,n(\sigma_i). 
$$ 
Unfortunately, the distribution function $f_{\sigma_i}^n(.)$ is not necessarily known on $v^*$ and a picewise linear interpolation is applied to compute the value at $v^*$. Then, to guarantee the local flux conservation we  modify the boundary solution by considering the renormalized boundary solution  for $v_l\cdot n(\sigma_i)\ge 0$
\begin{equation}
\label{xi}
\R f_{\sigma_i}^n(v_l)\,=\, \xi^n(\sigma_i)\,\, \R^* f^n_{\sigma_i}(v_l),
\end{equation}
where the nonnegative constant $\xi^n(\sigma_i)$ is given by
$$
\xi^n(\sigma_i) \, \Delta v^d\,\sum_{v_l\cdot n(\sigma_i)\ge 0} v_l\cdot n(\sigma_i)\,f_{\sigma_i}^n(v_l) \,\,=\,\,  -\Delta v^d\,\sum_{v_l\cdot n(\sigma_i) < 0} v_l\cdot n(\sigma_i)\,f_{i}^n(v_l). 
$$
Clearly $\R \,f_{\sigma_i}^n$ is constructed to guarantee a global zero flux property at the boundary $\sigma_i$ and preserves nonnegativity of the distribution at the boundary.

On the other hand, diffusive boundary conditions are also implemented such that the global flux at the boundary 
\begin{equation}
\label{mu}
\M f_{\sigma_i}^n(v_l)\,=\, \mu^n(\sigma_i) \,\exp \left(-\frac{v_l^2} {2k_BT_w}\right),
\end{equation}
where the constant $\mu^n(\sigma_i)\ge 0$ is computed to ensure the zero flux condition on $\sigma_i\subset\partial\Omega$
$$
\mu^n(\sigma_i) \, \Delta v^d\,\sum_{v_l\cdot n(\sigma_i) \ge 0} v_l\cdot n(\sigma_i)\,\exp \left(-\frac{v_l^2} {2k_BT_w}\right) \,\,=\,\,  -\Delta v^d\,\sum_{v_l\cdot n(\sigma_i)< 0} v_l\cdot n(\sigma_i)\,f_{i}^n(v_l). 
$$
From this construction, we easily prove the following property, which is the analogous result of Proposition~\ref{prop:1}.
\begin{proposition}
\label{prop:2}
The scheme (\ref{schema:vf1}) supplemented with the discrete boundary conditions (\ref{eq:BF})-(\ref{mu}) satisfies for all $n\in\NN$ and $\sigma_i\in\partial\Omega$
$$
\sum_{l}\Delta v^d\, \F^n(v_l,\sigma_{i}) = 0.
$$
Moreover, global mass is preserved over time
$$
\sum_{l}\Delta v^d m_i f_i^n(v_l)\,\,=\,\,\sum_{l}\Delta v^d m_i f_i^0(v_l).
$$
\end{proposition}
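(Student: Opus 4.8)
The plan is to prove the two assertions in turn, treating the vanishing of the boundary flux as the substantive step, after which global mass conservation follows from the standard telescoping of conservative finite volume fluxes. For the boundary flux, I would fix a boundary edge $\sigma_i\subset\partial\Omega$, write $n:=n(\sigma_i)$, and factor out the constants $m(\sigma_i)$ and $\Delta v^d$, so that the claim reduces to $\sum_{l}(v_l\cdot n)\,f_{\sigma_i}^n(v_l)=0$. I would split this velocity sum according to the sign of $v_l\cdot n$. On the incoming set $\{v_l\cdot n<0\}$ the boundary solution is $f_i^n$ by \eqref{eq:BF}; denote the incoming flux $J:=\Delta v^d\sum_{v_l\cdot n<0}(v_l\cdot n)\,f_i^n(v_l)$. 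On the outgoing set $\{v_l\cdot n\ge0\}$ I insert the decomposition $f_{\sigma_i}^n=(1-\alpha)\,\R f_{\sigma_i}^n+\alpha\,\M f_{\sigma_i}^n$ and treat the two pieces separately by linearity.

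The core of the argument is that the constants $\xi^n(\sigma_i)$ in \eqref{xi} and $\mu^n(\sigma_i)$ in \eqref{mu} were chosen precisely so that each outgoing piece reproduces $-J$. Using $\R f_{\sigma_i}^n=\xi^n(\sigma_i)\,\R^* f_{\sigma_i}^n$ together with the defining relation for $\xi^n(\sigma_i)$ gives $\Delta v^d\sum_{v_l\cdot n\ge0}(v_l\cdot n)\,\R f_{\sigma_i}^n(v_l)=-J$, and the defining relation for $\mu^n(\sigma_i)$ gives $\Delta v^d\sum_{v_l\cdot n\ge0}(v_l\cdot n)\,\M f_{\sigma_i}^n(v_l)=-J$ as well. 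The convex combination with weights $(1-\alpha)$ and $\alpha$ then yields an outgoing flux equal to $-J$, which cancels the incoming flux $J$; restoring $m(\sigma_i)$ gives $\sum_{l}\Delta v^d\,\F^n(v_l,\sigma_i)=0$.

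For the global mass I would sum the update \eqref{schema:vf1} against $\Delta v^d$ over all velocities $l$ and all cells $T_i\in\T$. The discrete time derivative produces $(M^{n+1}-M^n)/\Delta t$ with $M^n:=\sum_i\sum_l\Delta v^d\,m_i\,f_i^n(v_l)$, so it suffices to show the total flux contribution vanishes. Splitting the edge sum into interior and boundary edges, each interior edge $\sigma_{i,j}$ is traversed from both $T_i$ and $T_j$; since $n_{\sigma_{i,j}}=-n_{\sigma_{j,i}}$ and the reconstructed interface value is single-valued, the contributions $\F^n(v_l,\sigma_{i,j})$ and $\F^n(v_l,\sigma_{j,i})$ are opposite and cancel, while every boundary edge contributes $\sum_l\Delta v^d\,\F^n(v_l,\sigma_i)=0$ by the first part. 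Hence $M^{n+1}=M^n$, and induction on $n$ gives the stated conservation.

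The main obstacle I expect is the boundary bookkeeping: one must read \eqref{xi} and \eqref{mu} so that the renormalization acts on $\R^* f_{\sigma_i}^n$ and on the wall Maxwellian respectively, ensuring the specular and diffusive outgoing fluxes each match the incoming flux, and then verify that the $(1-\alpha)/\alpha$ weighting preserves this balance. The interior cancellation is by contrast the routine conservativity of the finite volume flux; the only point to confirm there is that the interface reconstruction $f_{i,j}^n$ is genuinely single-valued, so that the interior fluxes telescope.
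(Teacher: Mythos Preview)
Your argument is correct and is precisely the one the paper has in mind: the paper does not spell out a proof but merely remarks that the result follows ``from this construction'' as the discrete analogue of Proposition~\ref{prop:1}, and your write-up supplies exactly those details---the renormalizing constants $\xi^n(\sigma_i)$ and $\mu^n(\sigma_i)$ are defined so that each of the specular and diffusive outgoing fluxes balances the incoming flux, the convex combination preserves this, and then interior fluxes telescope. Your only residual caveat, that $f_{i,j}^n$ must be single-valued across an interior edge so that $\F^n(v_l,\sigma_{i,j})=-\F^n(v_l,\sigma_{j,i})$, is the standard conservativity of the upwind flux (the upstream cell is the same whether one looks from $T_i$ or $T_j$), and the paper takes it for granted.
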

\subsection{Stable time discretization}
\label{sec3-2}
 Another difficulty in the numerical resolution of the Boltzmann equation (\ref{eq:Q}) is due to the nonlinear stiff collision (source) terms induced by small mean free or relaxation time. In \cite{FJ}, we propose to penalize the nonlinear collision term
by a BGK-type relaxation term, which can be solved explicitly even if  discretized implicitly in time.  Since the convection term in (\ref{eq:Q}) is not stiff, we will treat it explicitly. The source terms on the right hand side of 
(\ref{eq:Q}) will be handled using the ODE solver in the previous section. For example, if a first order IMEX scheme  is used,
we have 
\begin{equation} 
\label{sch:perturb}
\left\{
\begin{array}{l}
m_i  \displaystyle{\frac{f^{n+1}_i-f^n_i}{\Delta t }  + \sum_{j\in\N(i)} \F^n(v_l,\sigma_{i,j}) \,=\, m_i\frac{\Q(f^n_i) \,-\, \Pf(f^n_i)}{\varepsilon}\,\,+\,\,m_i\,\frac{\Pf(f^{n+1}_i)}{\varepsilon},} 
  \\
  \,
  \\
 \displaystyle f^0_i(v_l)  = \frac{1}{m_i} \int_{T_i} f_0(x,v_l) dx\,.
\end{array}\right.
\end{equation}
 Using the relaxation structure of $\Pf(f)$ given by
$$
\Pf(f) \,\,=\,\, \lambda \, \left(\,\M[\rho,u,T](v) \,-\, f(v) \,\right),
$$
it can be written as
\begin{eqnarray}
\nonumber
m_i\,f_i^{n+1} &=& \frac{\varepsilon}{\varepsilon+\lambda^{n+1}\Delta t} \left( m_i\,f_i^n - \Delta t  \sum_{j\in\N(i)} \F^n(v,\sigma_{i,j}) \right) \,+\, \Delta t\,m_i\,\frac{\Q(f_i^n) \,-\, \Pf(f^n_i)}{\varepsilon+\lambda^{n+1} \Delta t} 
\\
\nonumber
\,
\\
\label{AP-1}
&&+\,\, m_i\;\frac{\lambda^{n+1} \Delta t}{\varepsilon+\lambda^{n+1} \Delta t}\,\M_i^{n+1},
 \end{eqnarray}
where $\lambda^{n}=\lambda[\rho^{n}, T^{n}]$ and $\M^{n}_i$ is the local Maxwellian distribution $\M[\rho^n,u^n,T^n]$ computed from $f_i^{n}$ in the control volume $T_i$. Moreover, a second order IMEX scheme can also be implemented \cite{FJ}.   

Let us mention that a similar approach is proposed by G. Dimarco \& L. Pareschi using exponential Runge-Kutta methods for stiff kinetic equations \cite{DP}.

\section{Numerical tests}
\label{sec4}
In this section, we present a large variety of test cases showing the
effectiveness of our method to get an accurate solution of the Boltzmann
equation. We first give a classical example, which illustrates the property of
the time discretization.

 Finally, we present an interesting result in the 2+2 dimensional phase space proving the
high accuracy of our method, which is able to reproduce small effects (ghost
effects).


\subsection{Trend to equilibrium}
\label{sec4-1}
We consider the full Boltzmann equation in dimension $d=2$
$$
\derpar{f}{t} + v \cdot \nabla_x f = \frac{1}{\var} \Q(f), \quad x\in [0,1], v \in \RR^2,
$$
with purely specular reflection at the boundary  in $x$. We first introduce the $(d+ 2)$ scalar fields of density $\rho$, mean velocity $u$ and temperature $T$ defined by (\ref{field}). Whenever
$f(t,x,v)$ is a smooth solution to the Boltzmann equation with
specular boundary conditions, one has the global conservation laws
for mass and energy. Therefore, without loss of generality we shall impose
\begin{eqnarray*}
\int_{[0,1]\times\RR^2} f(t,x,v) \,dx \, dv=1, \quad \int_{[0,1]\times\RR^2} f(t,x,v)\,\frac{|v|^2}{2} \,dx \, dv=1.
\end{eqnarray*}
These conservation laws are then enough to uniquely determine the stationary state of
the Boltzmann equation: the normalized global Maxwellian distribution
\begin{equation}
\label{maxwglob}
\M_g(v) = \frac{1}{2\pi k_B}\,\exp\left(-\frac{|v|^2}{2k_B}\right).
\end{equation}
We shall use the following terminology: a velocity distribution of the
form (\ref{maxwglob}) will be called a {\em Maxwellian distribution}, whereas a distribution of the form
\begin{equation}
\label{maxwloc}
\M_l(x,v) = \frac{\rho(x)}{2\pi k_B T(x)}\,\exp\left(-\frac{|v-u(x)|^2}{2k_B T(x)}\right)
\end{equation}
will be called a {\em local Maxwellian distribution} (in the sense that the constants
$\rho$, $u$ and $T$ appearing there depend on the position $x$). We also define the notion of
{\em relative local entropy} $H_l$, the entropy relative to the local Maxwellian, and the
{\em relative global entropy} $H_g$, the entropy relative to the global Maxwellian distribution, by
$$
\H_l(t) = \int f \, \log\left(\frac{f}{\M_l}\right) \, dx \, dv,
\quad \H_g(t) = \int f \, \log\left(\frac{f}{\M_g}\right) \,  dx \, dv.
$$

Our goal here is to investigate numerically the long-time behavior of the solution $f$.
If $f$ is any reasonable solution of the Boltzmann equation, satisfying certain {\em a priori}
bounds of compactness (in particular, ensuring that no kinetic energy is allowed to leak at
large velocities), then it is possible to prove that $f$ does indeed converge to the global Maxwellian
distribution $\M_g$ as $t$ goes to $+\infty$. 
More recently, Desvillettes and Villani \cite{DV:EB:03}, Guo and Strain \cite{Guo:rate} were interested in the study of
rates of convergence for the full Boltzmann equation.
Roughly speaking in \cite{DV:EB:03}, the authors proved that if the solution to the 
Boltzmann equation is smooth enough and satisfies bounds from below, 
then (with constructive bounds)
$$
\| f(t) - \M_g \| = O(t^{-\infty}),
$$
which means that the solution converges almost exponentially fast to the global equilibrium
(namely with polynomial rate $O(t^{-r})$ with $r$ as large as wanted).

The solution $f$ to the Boltzmann equation satisfies the formula of additivity of the entropy:
the entropy can be decomposed into the sum of a purely hydrodynamic part, and (by contrast) of a
purely kinetic part. In terms of $H$ functional: one can write
$$
\H_g(t)  = \H_l(t) + \H_h(t),
$$
with the hydrodynamic entropy $H_h$ 
$$
\H_h(t) \,\,=\,\, \int_0^1\rho_l(t,x)\,\log\left(\frac{\rho_l(t,x)}{T_l(t,x)}\right) \, dx.
$$
Moreover in \cite{DV:EB:03}, Desvillettes and Villani
conjectured that time oscillations should occur on the evolution
of the relative local entropy. In fact their proof does not rule
out the possibility that the entropy production undergoes important oscillations in time, and actually most of the 
technical work is caused by this possibility. In \cite{FMP}, the authors investigate the same problem with periodic boundary conditions and justify the oscillation frequency and damping rate using a spectal analysis of the linearized Boltzmann equation (see \cite{ElPi:75}).

Here, we performed simulations on the full Boltzmann equation in a
simplified geometry (one dimension of space, two dimensions of
velocity, with pure specular boundary conditions) for different values of the Knudsen number $\varepsilon>0$ with
the fast spectral method to observe the evolution of the entropy
and to check numerically if such oscillations occur. Clearly this
test is challenging for a numerical method due to the high
accuracy required to capture such oscillating behavior.

Then, we consider an initial datum as a perturbation of the global equilibrium $M_g$
begin{equation}
\begin{equation}
\label{ini2}
f_0(x,v) = \frac{1}{2\pi v_{th}^2} \,(1+ A_0 \, \sin(2\pi \,x))\, 
\left[\exp\left(-\frac{|v-v_0|^2}{2 T_0(x)}\right) + \exp\left(-\frac{|v+v_0|^2}{2 T_0(x)}\right) \right],
\end{equation}
with $v_0=\frac{1}{\sqrt{5}}\,(1,1)$,  the constant $A_0=0.2$ and
$$
T_0(x) = \frac{2}{\sqrt{5}}\,\left( \,1 \,+\, 0.1\,\cos(2\pi x)\,\right), \quad x\in [0,1].
$$

We present the time evolution of the relative entropies $\H_g$, $\H_l$ and $\H_h$ in log scale and observe that initially the entropy is strongly decreasing and when the distribution function becomes close to a local equilibrium, some oscillations appear  for small values of $\var\ll 1$ (see Figure \ref{fig3-1}). The superimposed curves yield the time evolution respectively of the total $\H_g$ functional, of
its kinetic part $\H_l$ and the hydrodynamic entropy $\H_h$.

In Figures \ref{fig3-1} and \ref{fig3-2}, we are indeed able to observe oscillations in the entropy production and in the hydrodynamic entropy, where the strength of the oscillations depends a lot on the parameter $\var$.

\begin{figure}[htbp]
\begin{tabular}{cc}
\includegraphics[width=7.5cm,height=7.5cm]{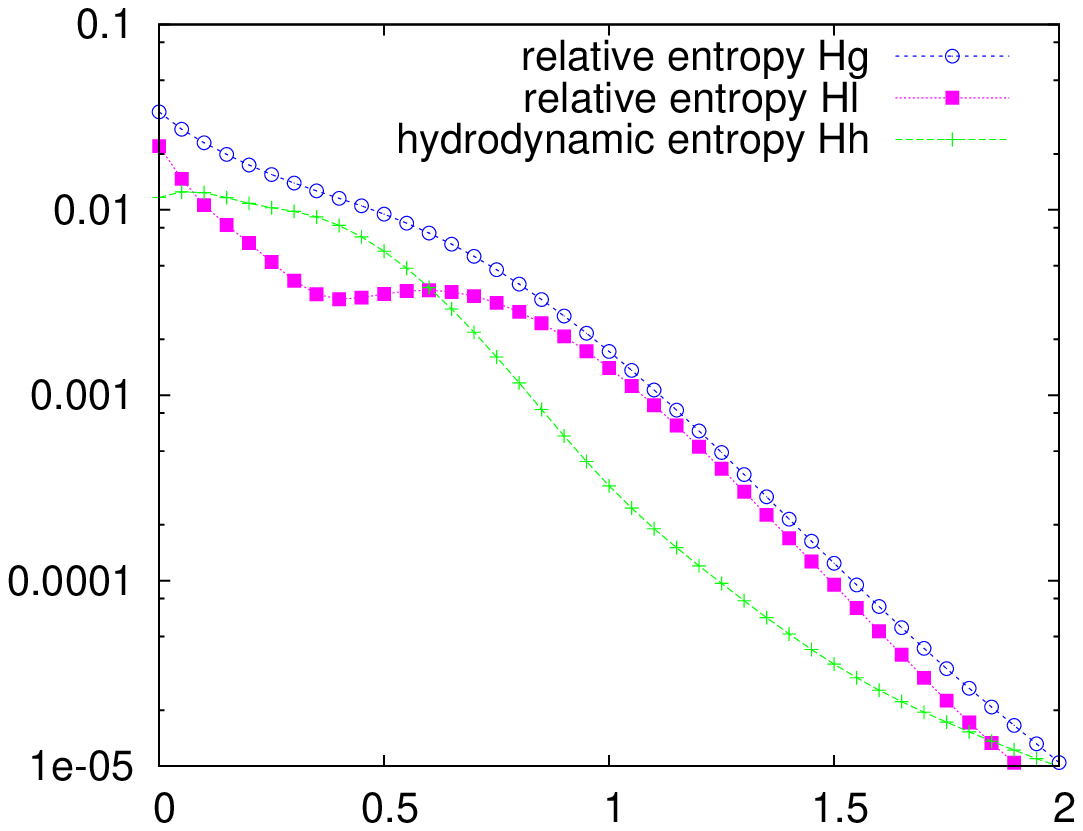} &  
\includegraphics[width=7.5cm,height=7.5cm]{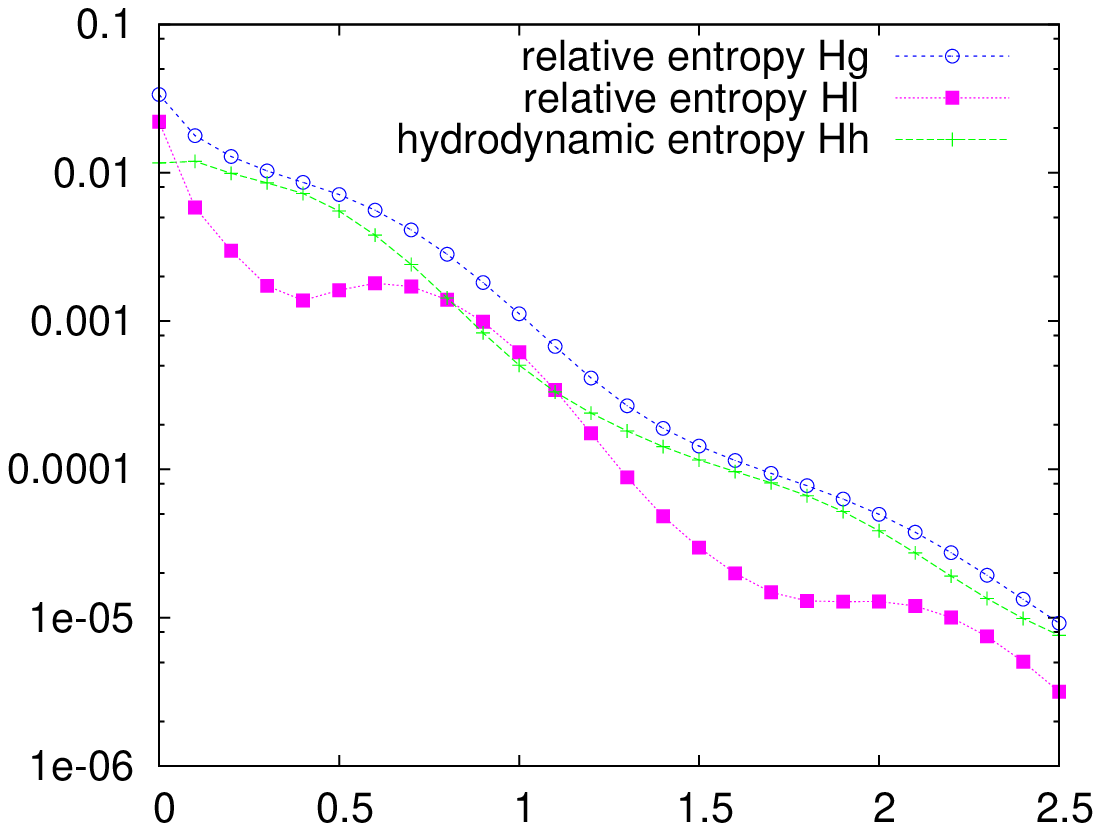} 
\\
(1) $\var=1$ &(2) $\var=0.5$
\end{tabular}
\caption{Trend to equilibrium: {\em time evolution of the entropy relative to the global equilibrium $\H_g$,  entropy relative to the local equilibrium  $\H_l$ and hydrodynamic entropy $\H_h$ (1) $\var=1$ and (2) $\var=0.5$.}}
\label{fig3-1}
\end{figure}

The first plot corresponds to $\var=1$ and the second one to $\var=0.5$. Some slight oscillations can be seen
in the case $\var=1$, but what is most striking is that after a short while, the kinetic entropy is
very close to the total entropy: an indication that the solution evolves basically in a spatially
homogeneous way (contrary to the intuition of the hydrodynamic regime). For  $\var=0.5$, the kinetic entropy $H_l$ is still nonincreasing but after a while it becomes much smaller than the hydrodynamic part and some variations on the decay of the kinetic entropy can be observed.

\begin{figure}[htbp]
\begin{tabular}{cc}
\includegraphics[width=7.5cm,height=7.5cm]{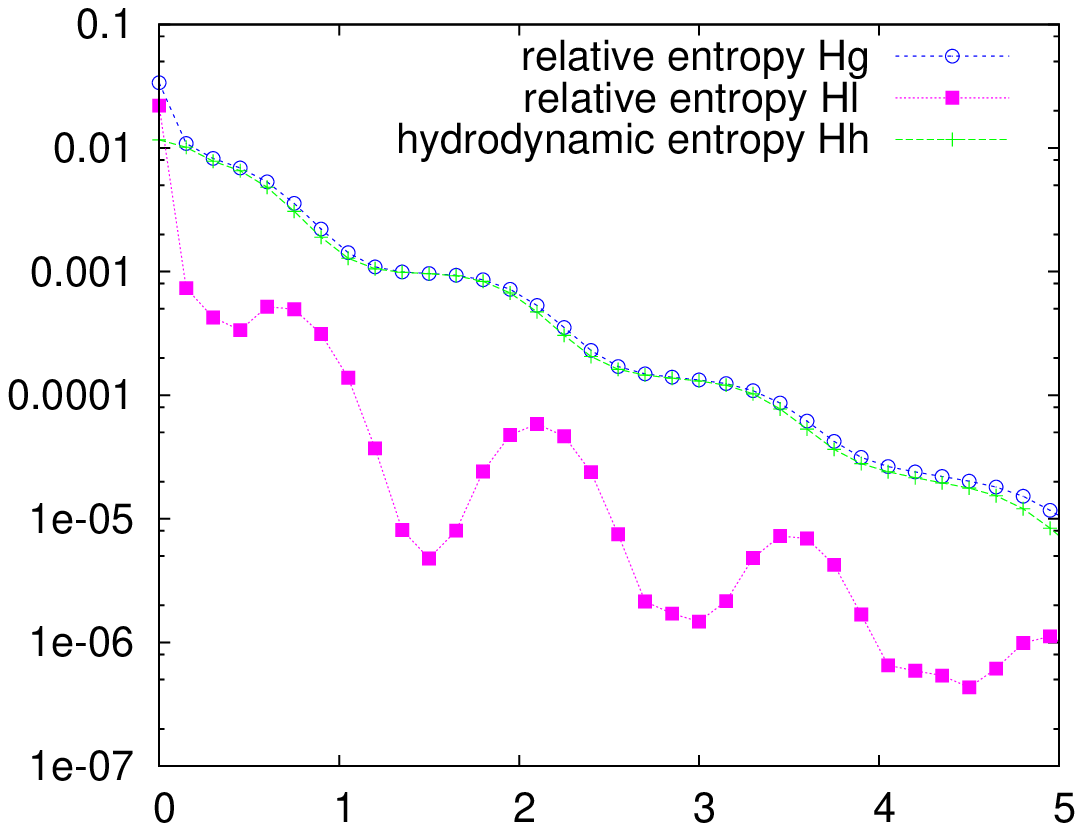} &  
\includegraphics[width=7.5cm,height=7.5cm]{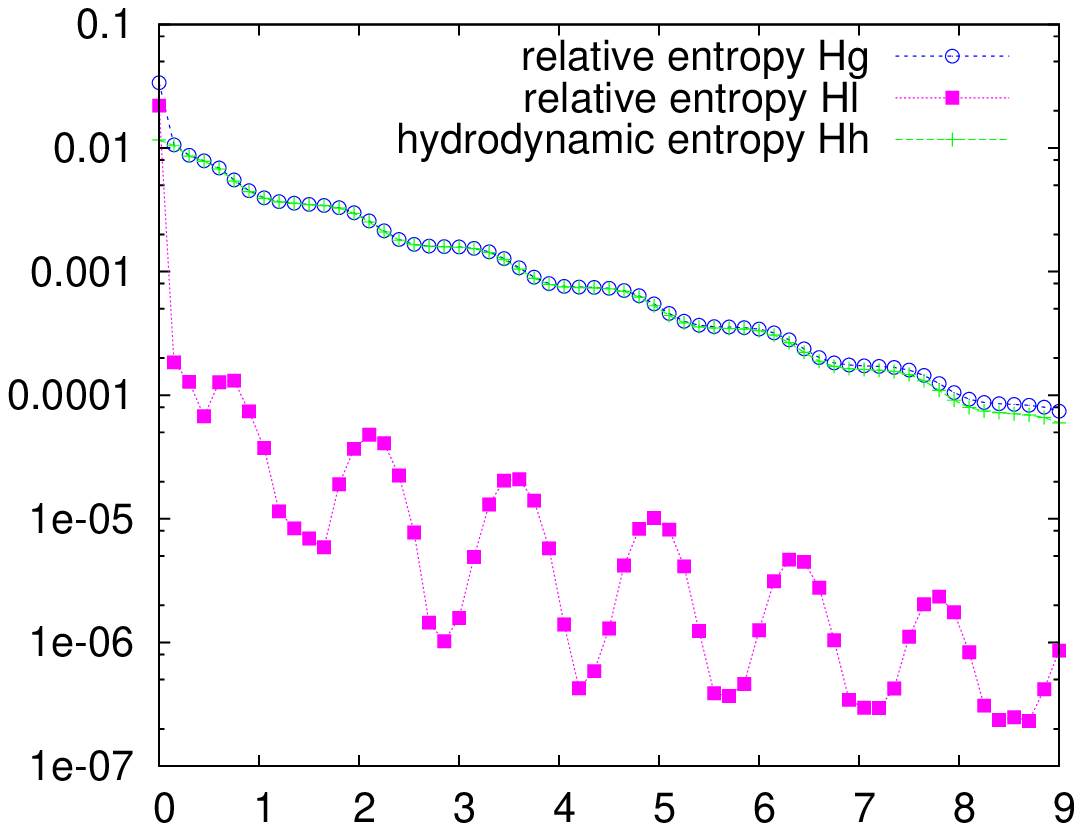} 
\\
(1) $\var=0.15$ &(2) $\var=0.05$ 
\end{tabular}
\caption{Trend to equilibrium: {\em time evolution of the entropy relative to the global equilibrium $\H_g$,  entropy relative to the local equilibrium  $\H_l$ and hydrodynamic entropy $\H_h$ (1) $\var=0.15$ and (2) $\var=0.05$.}}
\label{fig3-2}
\end{figure}

On the contrary, in the case $\var=0.15$ and  more clearly for $\var=0.05$, the oscillations are much more important but they appear with a  small  amplitute and the hydrodynamic entropy is relatively closed to the entropy relative to the global equilibrum. Here we are in the hydrodynamic regime and as it has already be mentionned in \cite{FMP} the oscillation frequency and damping rate of the entropy can be predicted by a precise spectral analysis of the linearized compressible Euler system and compressible Navier-Stokes system.   

Further note that the equilibration is much more rapid when $\var$ is small, and that the convergence seems to be exponential. We also observe that the damping rate is related to $\var$ and is in fact proportional to $\var$ when $\var$ becomes  small.


\subsection{Flow generated by a gradient of temperature}
\label{sec4-2}
We consider the Boltzmann equation (\ref{eq:Q})-(\ref{eq:Q2}) 
$$
\left\{
\begin{array}{l}
\displaystyle \derpar{f}{t} + v_x \frac{\partial f}{\partial x} \,=\, \frac{1}{\varepsilon}\,\Q(f),\,\, x\in (-1/2,1/2),\,v\in\RR^2,
\\
\,
\\
\displaystyle f(t=0,x,v) = \frac{1}{2\pi \,k_B\, T_0(x)} \,\exp\left(-\frac{|v|^2}{2 k_B\,T_0(x)}\right),  
\end{array}\right.
$$
with $k_B=1$, $T_0(x)= 1 \,+\, 0.44\,(x-1/2)$ and we assume purely diffusive boundary conditions on $x=-1/2$ and $x=1/2$, which can be written as 
$$
f(t,x,v) = \mu(t,x)\; f_w(v), \,{\rm if }\,(x,v_x)\in\{-1/2\}\times\RR^+\,{\rm and }\,(x,v_x)\in\{1/2\}\times\RR^-,
$$
where $\mu$ is given by (\ref{eq:MU}). This problem has already been studied in \cite{wagner} using DMCS for the Boltzmann equation or using deterministic approximation usinga BGK model for the Boltzmann equation in \cite{Aoki94}. Here we apply our deterministic scheme and choose a computational domain $[-8,8]\times [-8,8]$ in the velocity space with a number grid points  $n_v=32$ in each direction whereas for the space direction we take $n_x=120$ and the time step $\Delta t=0.001$.

In Figure~\ref{Fig_bl_1}, we represent the stationary solution obtained at $t=25$ of the density, temperature. We also plot the pressure profile of the steady state. The results are in a qualitative good agreement with those already obtained in \cite{wagner} with DSMC. More precisely, the boundary layer (Knudsen layer) appears in the density and temperature as well as the pressure, but it is small for all the quantities. The magnitude in the dimensionless density, temperature, and pressure is of order of $\var$ and the thickness of the layer is, say $O(3 \var)$. In the density and temperature profiles, we cannot observe it unless we magnify the profile in the vicinity of the boundary. Instead, since the pressure is almost constant in the bulk of the gas, we can observe perfectly the boundary layer by magnifying the entire profile. Let us emphasize that, as it is shown in Figure~\ref{Fig_bl_1} the Knudsen layer is a kinetic effect, which disappears in the fluid limit ($\var \rightarrow 0$).

\begin{figure}[htbp]
\begin{tabular}{ccc}
\includegraphics[width=4.125cm,height=4.5cm]{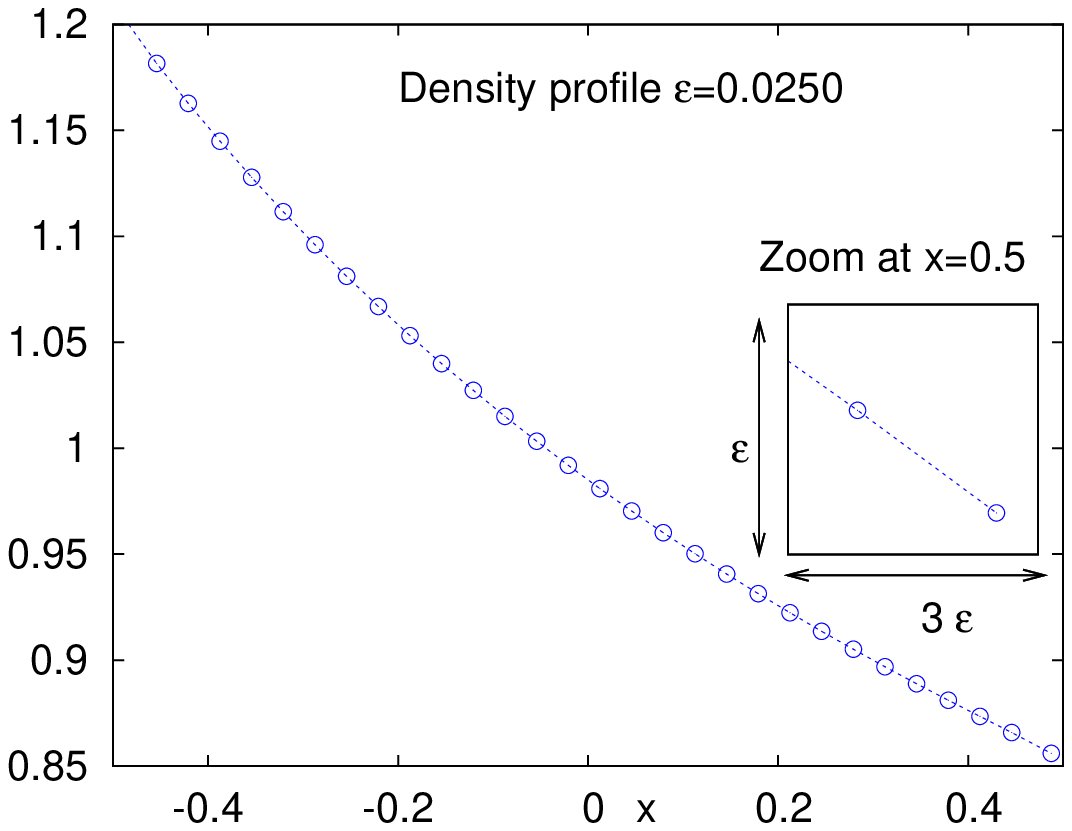} &  
\includegraphics[width=4.125cm,height=4.5cm]{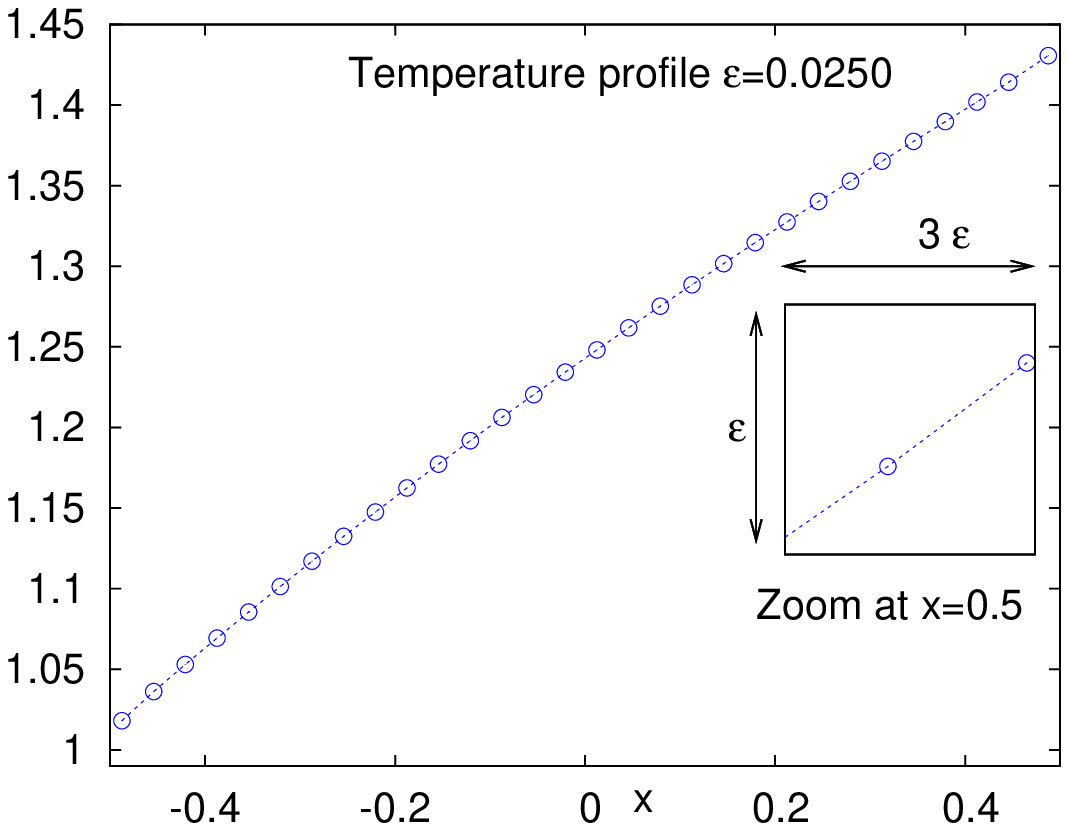} &
\includegraphics[width=6.250cm,height=4.5cm]{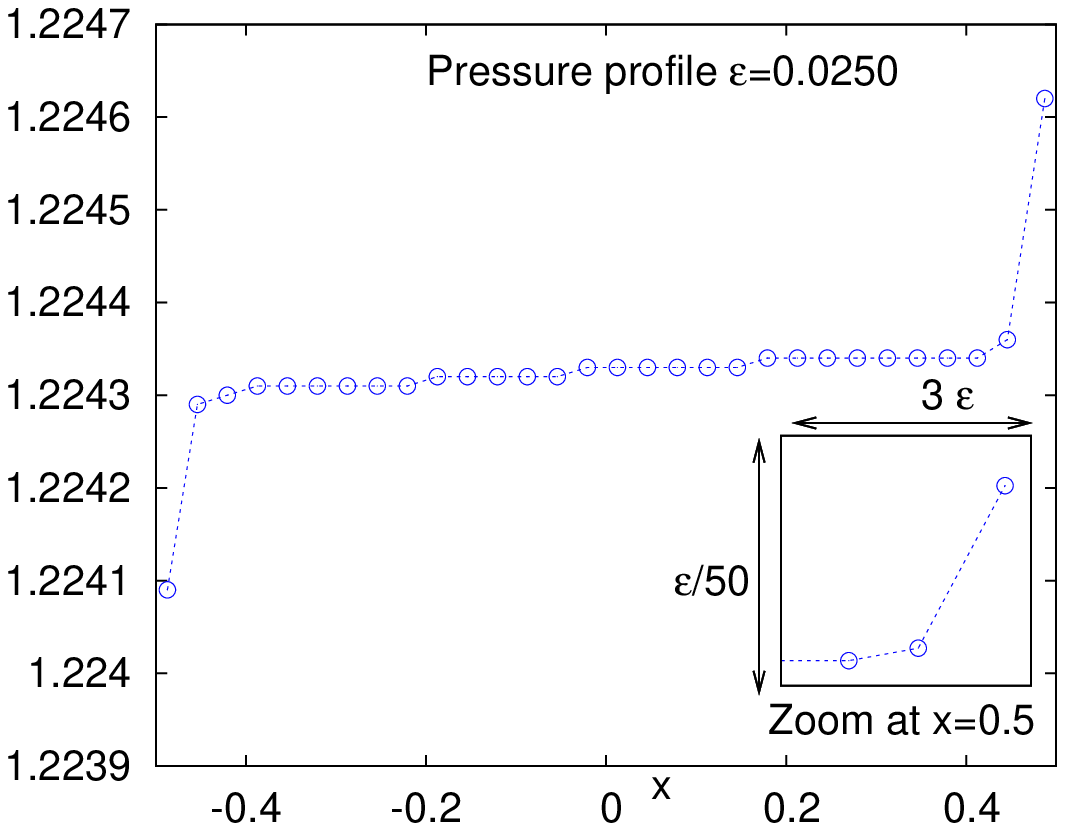} 
\\
\includegraphics[width=4.125cm,height=4.5cm]{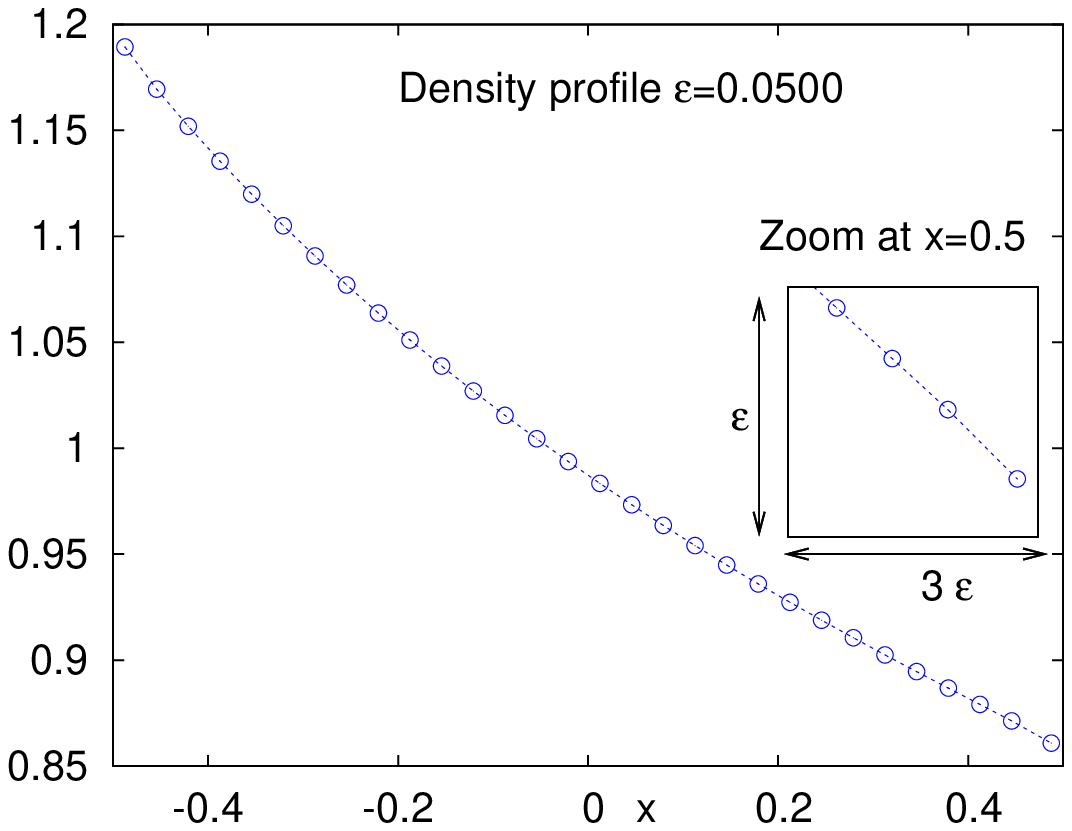} &  
\includegraphics[width=4.125cm,height=4.5cm]{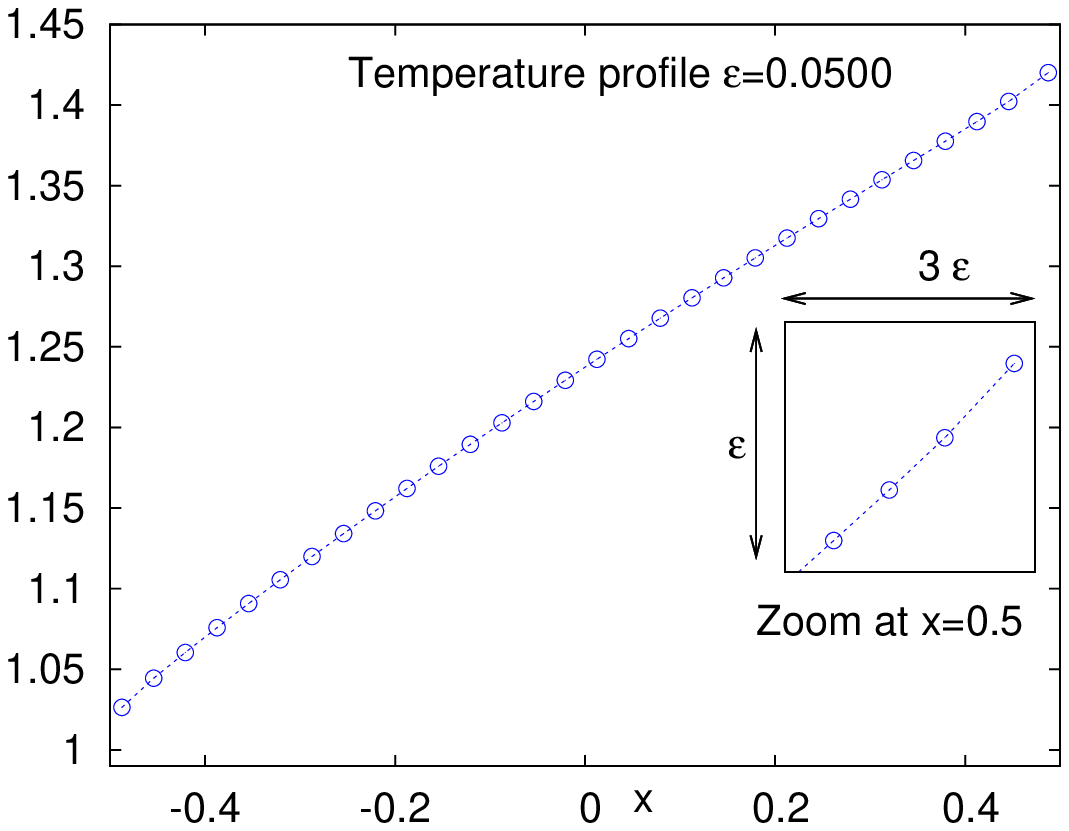} &
\includegraphics[width=6.250cm,height=4.5cm]{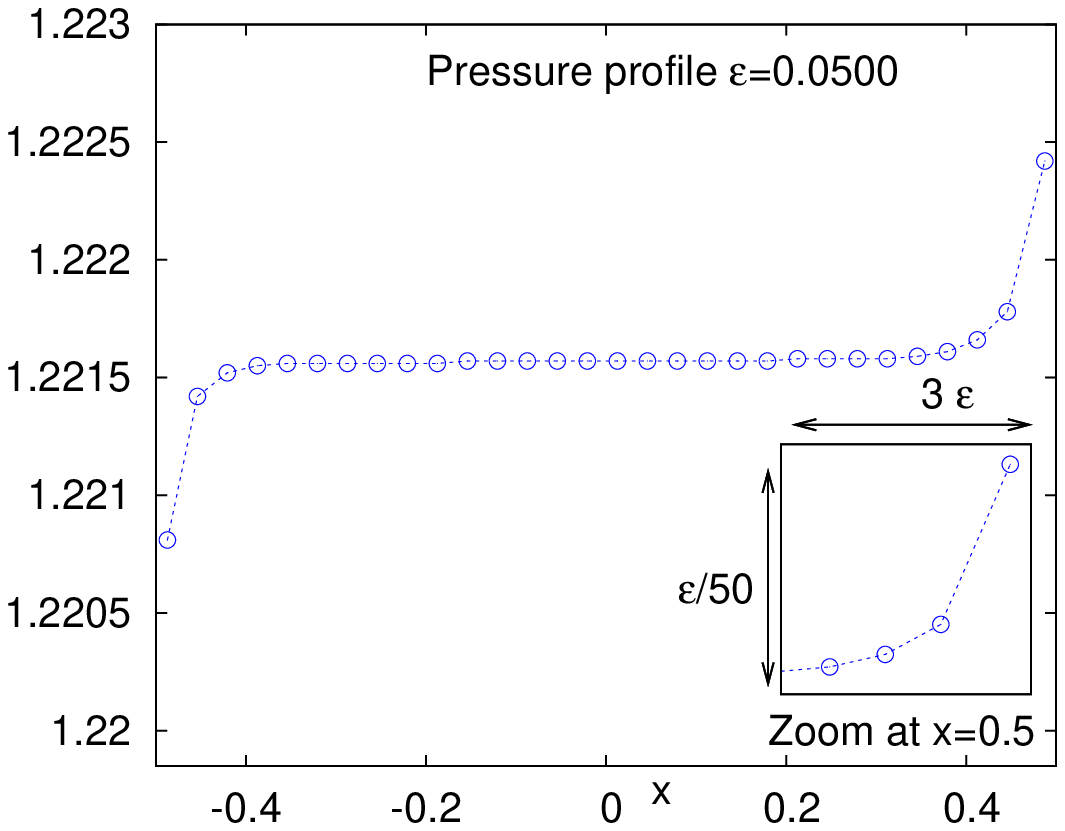}
\\
\includegraphics[width=4.125cm,height=4.5cm]{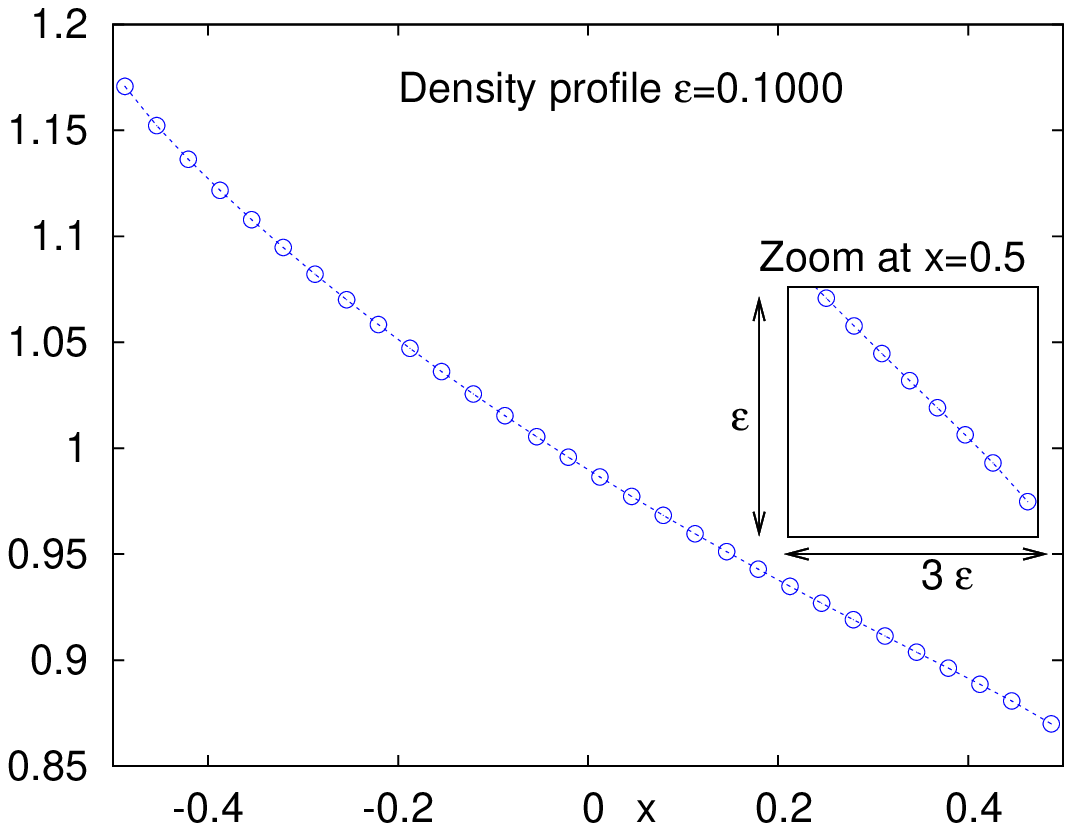} &  
\includegraphics[width=4.125cm,height=4.5cm]{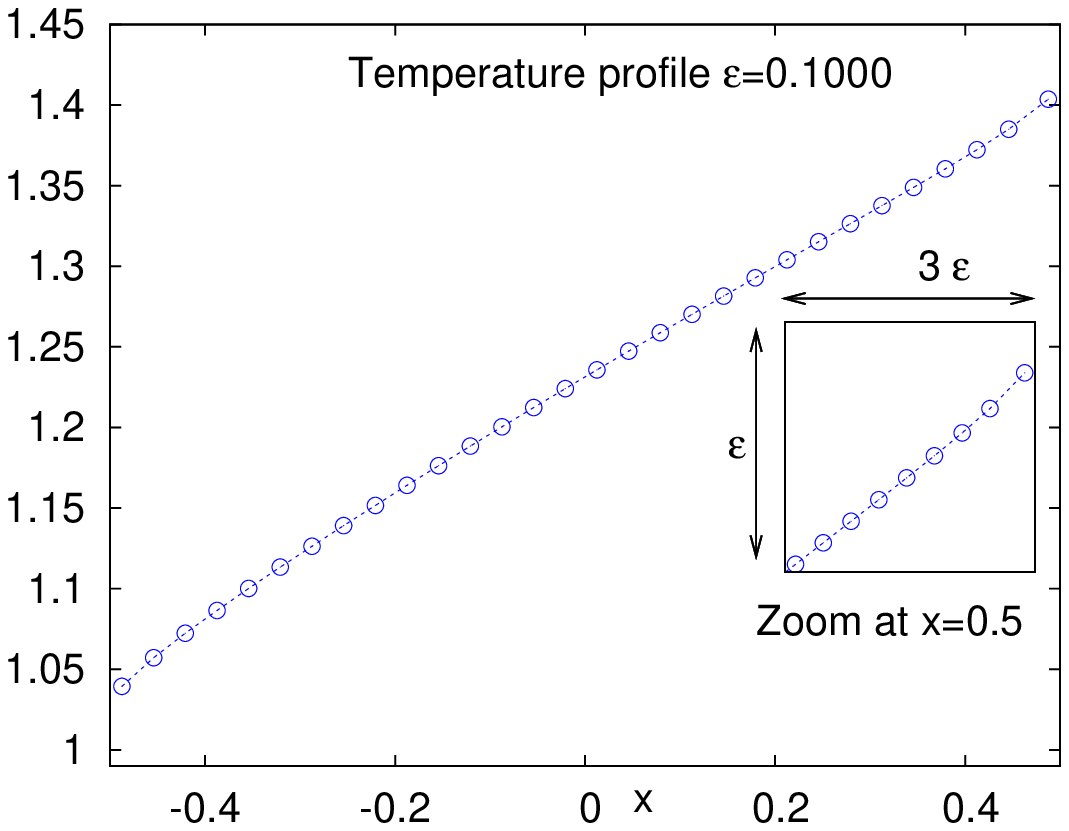} &
\includegraphics[width=6.250cm,height=4.5cm]{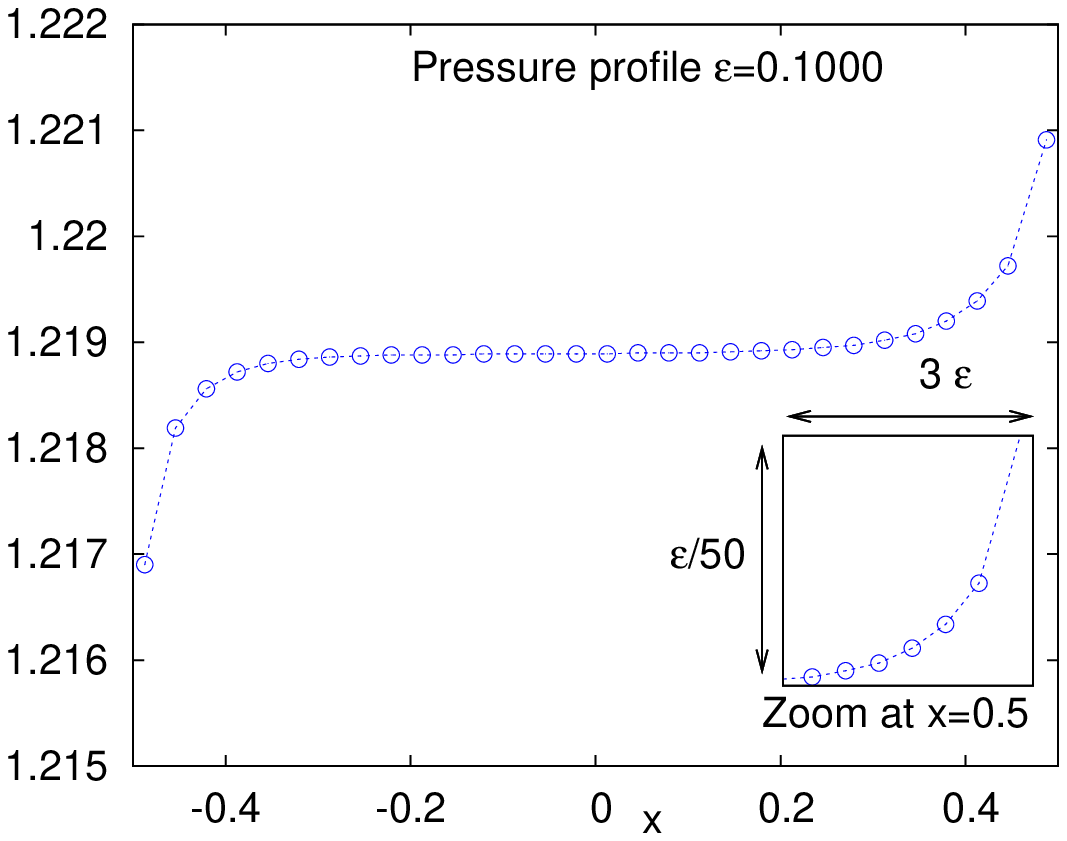}     
\\
(1)&(2)&(3)
\end{tabular}
\caption{Flow generated by a gradient of temperature: {\em (1) density and (2) temperature (3)  pressure   for various Knudsen numbers $\var = 0.025$, $0.05$ and $0.1$.}}
\label{Fig_bl_1}
\end{figure}

These results provide strong evidence that the present deterministic method can be used to determine the state of a gas under highly nonequilibrium conditions. Using deterministic methods, we can investigate the behavior of gases for situations in which molecular diffusion is important e.g., thermal diffusion.


\subsection{Poiseuille-type flow driven by a uniform external force}
\label{sec4-3}
The Poiseuille flow is a classical example to study by means of the Navier-Stokes equations. Usually the Poiseuille flow is understood to be driven by an externally imposed pressure gradient, but it is trivially equivalent to applying a gravitational force over each particle. For small velocities (small Reynolds or Mach number) the flow is known to be laminar and stationary and the velocity profile is parabolic. There is, however, a critical Knudsen number above which an unstable regime starts and the flow can be described using Burnett equations.

Here, we consider an ideal gas between two parallel infinite plates at rest with a common uniform temperature. When the gas is subject to a uniform external force in the direction parallel to the plates, a steady unidirectional flow of the gas is caused between the plates. Assume that the plates are at rest and located at $x=0$ and $x=1$ and kept at temperature	$T_w=1$. The gas is subject to a uniform external force in the $y$-direction, {\it i.e.}, in the direction parallel to the plates. There is no pressure gradient in the $y$-direction. We investigate the steady flow of the gas caused by the external force on the basis of kinetic theory for a wide range of the Knudsen number, paying special attention to the behavior for small Knudsen numbers. Our basic assumptions are as follows:
\begin{itemize}
\item the behavior of the gas is described by the Boltmzann model (\ref{eq:Q})-(\ref{eq:Q2}), 
\item the gas molecules are reflected diffusely on the plates.
\end{itemize}
The Boltzmann equation in the present problem is written as
$$
\left\{
\begin{array}{l}
\displaystyle \derpar{f}{t} + v_x \frac{\partial f}{\partial x} \,+\, a \frac{\partial f}{\partial v_y} \,=\, \frac{1}{\varepsilon}\,\Q(f),
\\
\,
\\
f(t=0) = f_0,
\end{array}\right.
$$
with purely diffusive boundary conditions. In this section, we give some numerical results for intermediate values of the Knudsen number in the case where $a$ is fixed. We apply our determinitic scheme and choose a computational domain $[-8,8]\times [-8,8]$ in the velocity space with a number grid points  $n_v=32$ in each direction and for the space direction $n_x=64$. Finally we take $\Delta t=0.002$.

In Figures~\ref{Fig_pf_1}, we show the profiles of the density, flow velocity, and temperature for various Knudsen numbers $\varepsilon>0$ but for a fixed value of the force parameter $a=0.5$. In that case, the driven force amplitude $a$ is small and then the flow speed is naturally low, and thus the density and temperature profiles are nonuniform.

We do not present the plot of the pressure, which is not uniform but its nonuniformity is quite small since it is proportional to $\epsilon^2$. The most interesting remark is that the temperature profile is not parabolic and its nonuniformity is dominated by a $x^4$ term. The temperature at this order has a minimum at the center of the channel but it has symmetric maxima quite near the center as it has been already shown in \cite{santos}

\begin{figure}[htbp]
\begin{tabular}{cc}
\includegraphics[width=7.5cm]{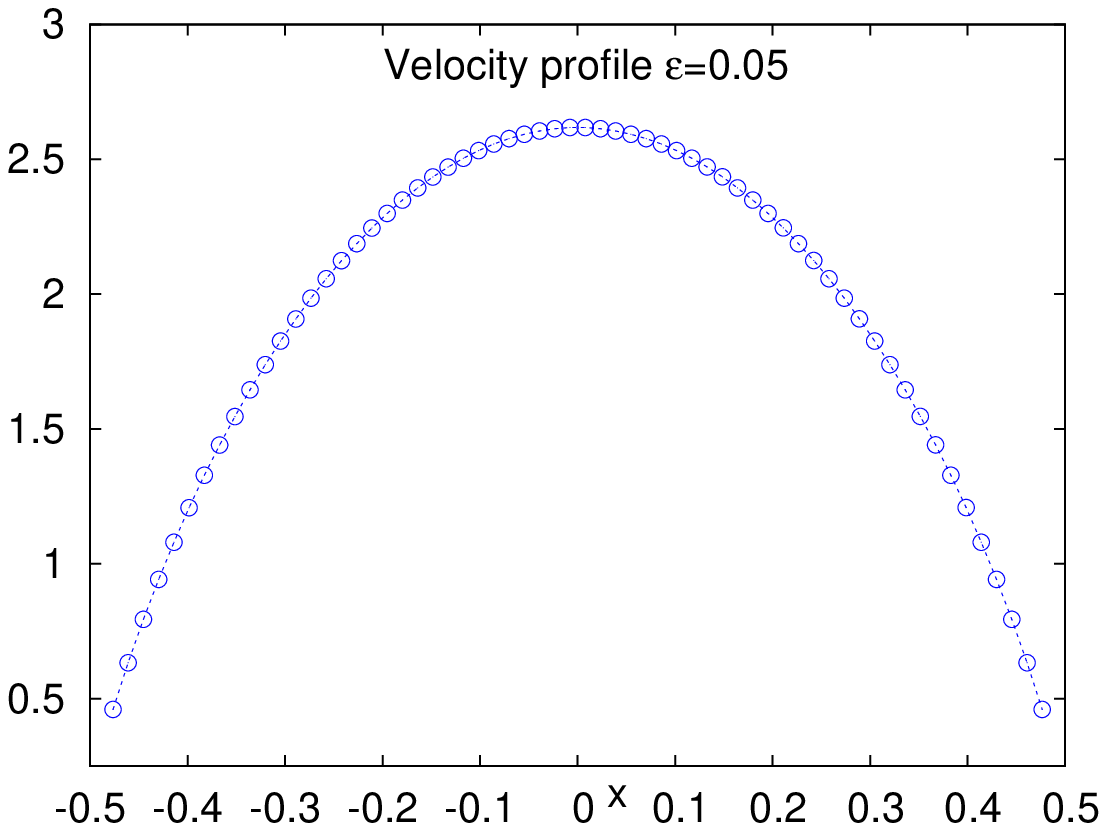} &  
\includegraphics[width=7.5cm]{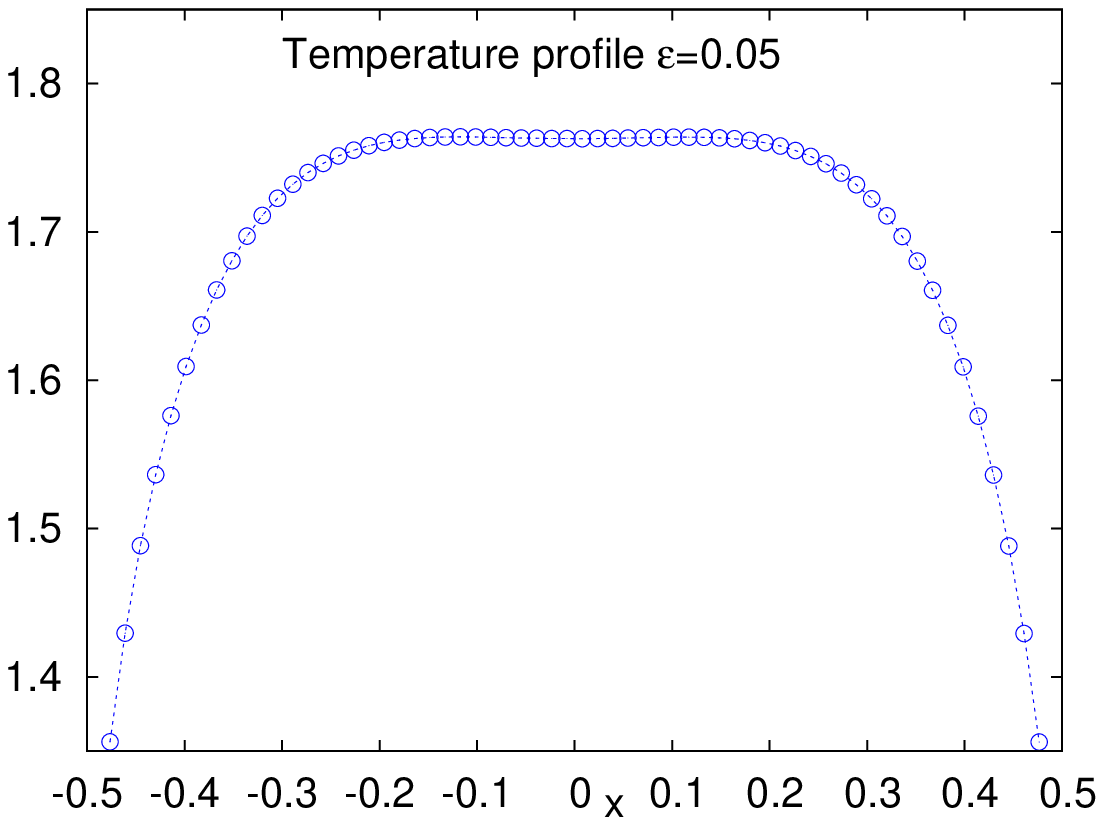}     
\\
\includegraphics[width=7.5cm]{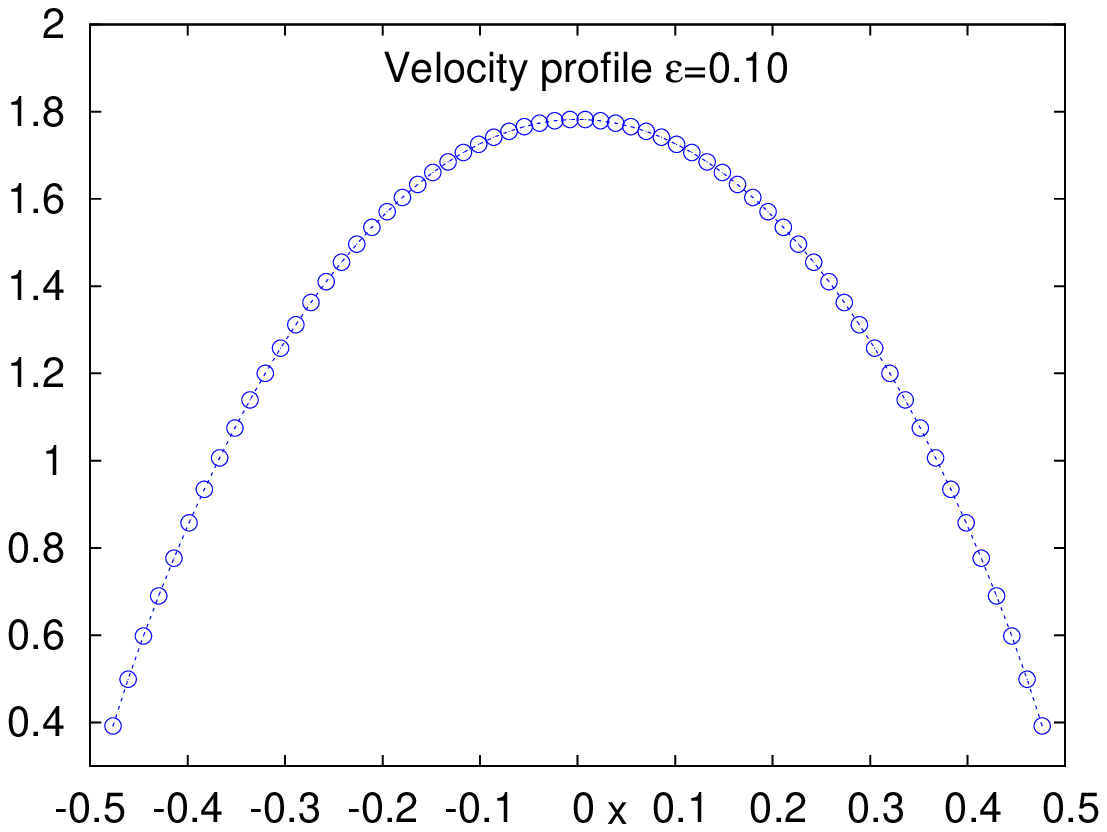} &  
\includegraphics[width=7.5cm]{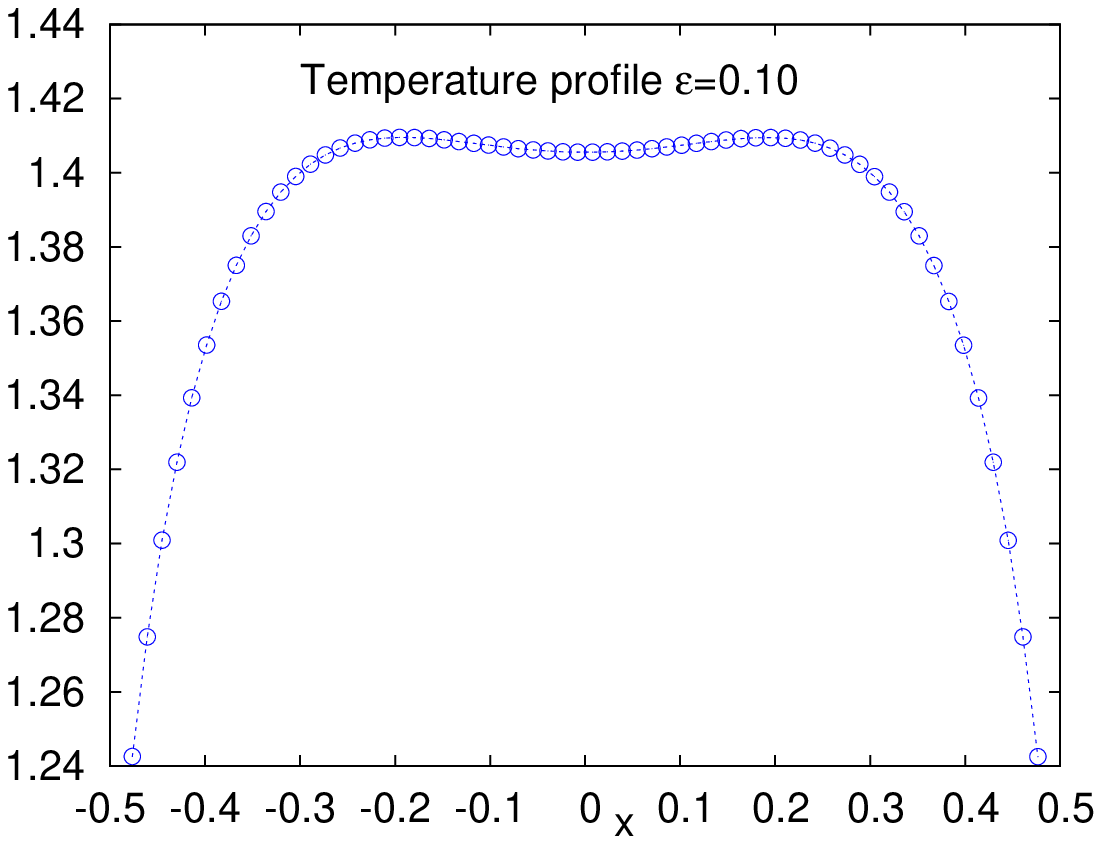} 
\\
\includegraphics[width=7.5cm]{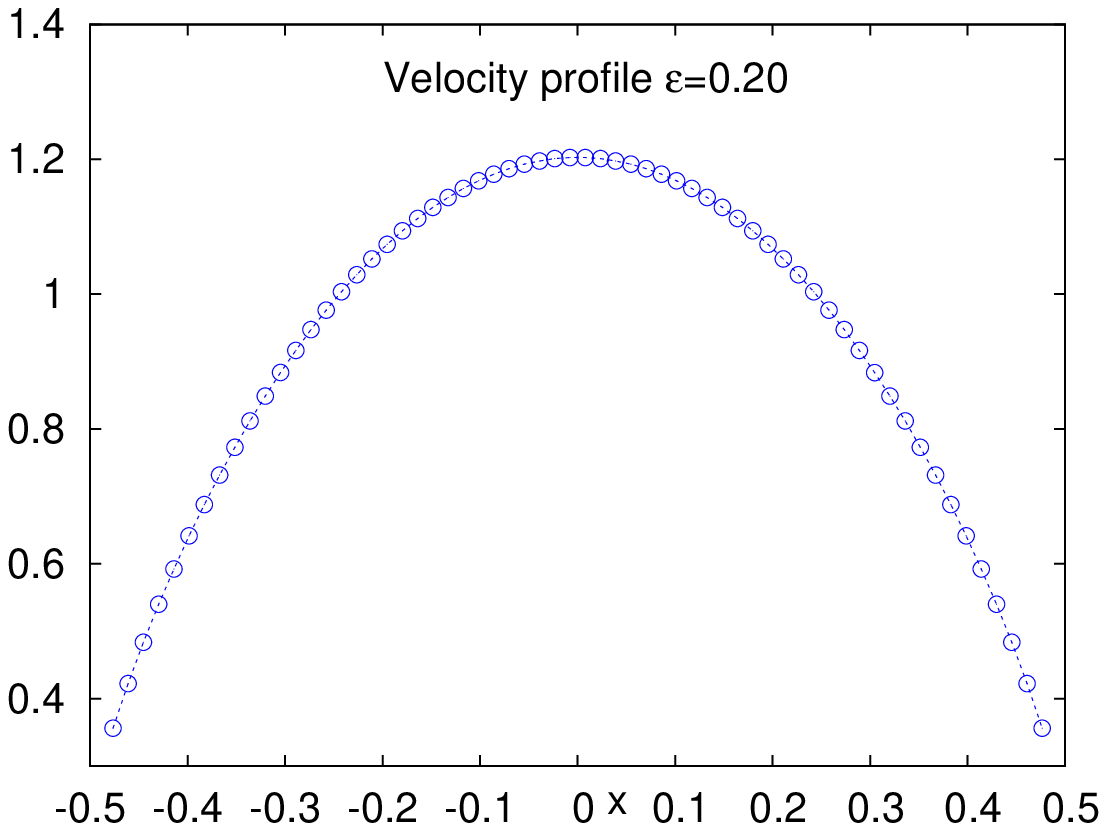} &  
\includegraphics[width=7.5cm]{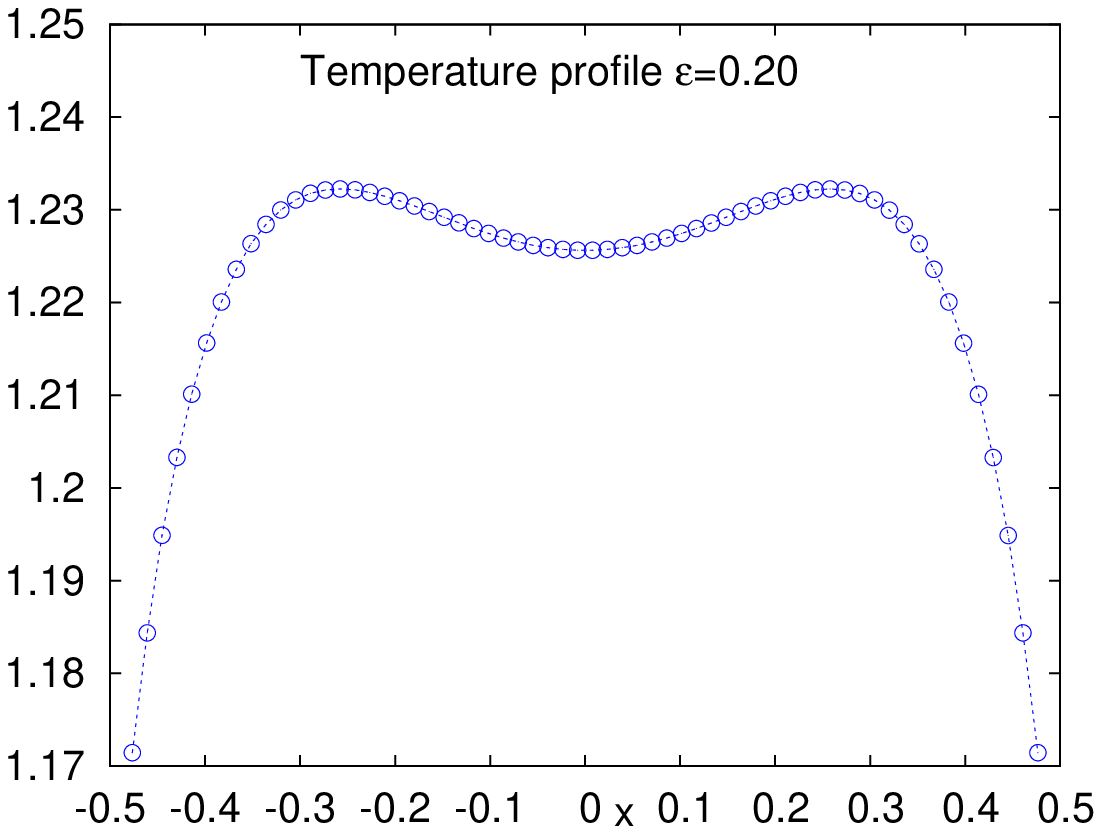} 
\\
\includegraphics[width=7.5cm]{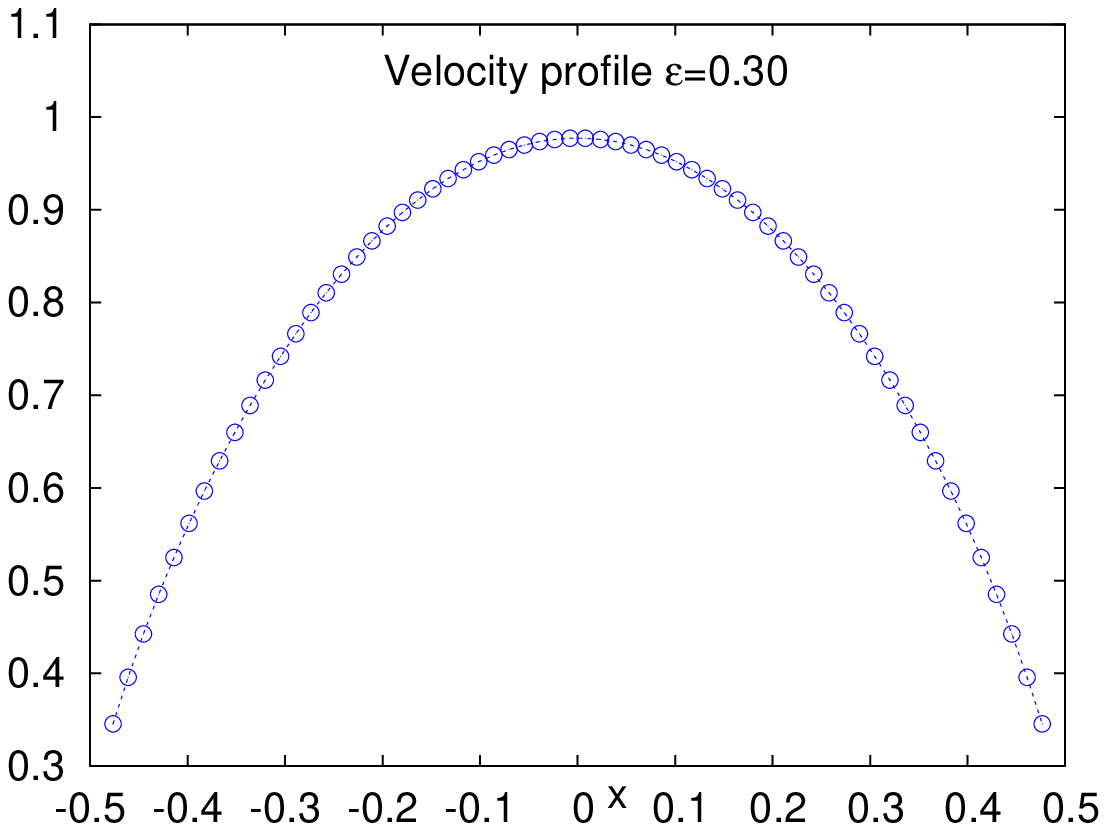} &  
\includegraphics[width=7.5cm]{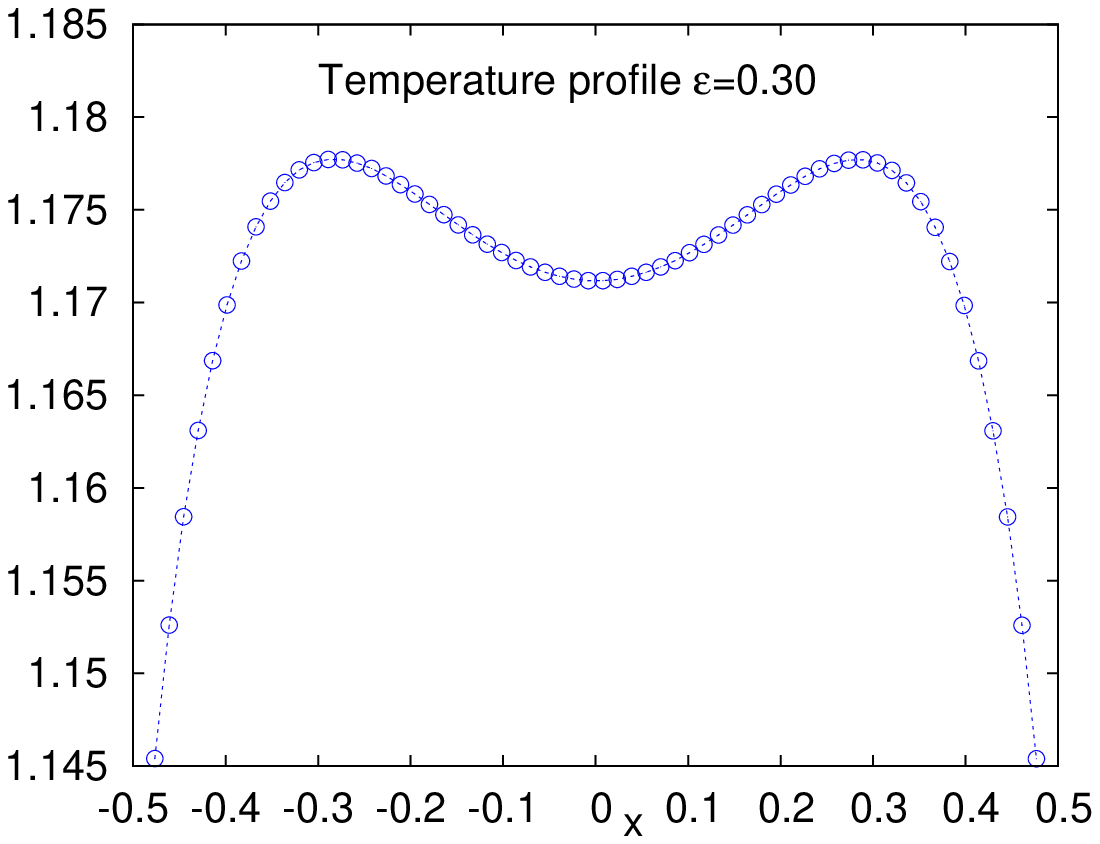} 
\\
(1)&(2)
\end{tabular}
\caption{Poiseuille flow: {\em (1) mean velocity and (2) temperature  for various Knudsen numbers $\var=0.05$, $0.1$, $0.2$ and $0.3$.}}
\label{Fig_pf_1}
\end{figure}

\begin{figure}[htbp]
\begin{tabular}{cc}
\includegraphics[width=7.5cm]{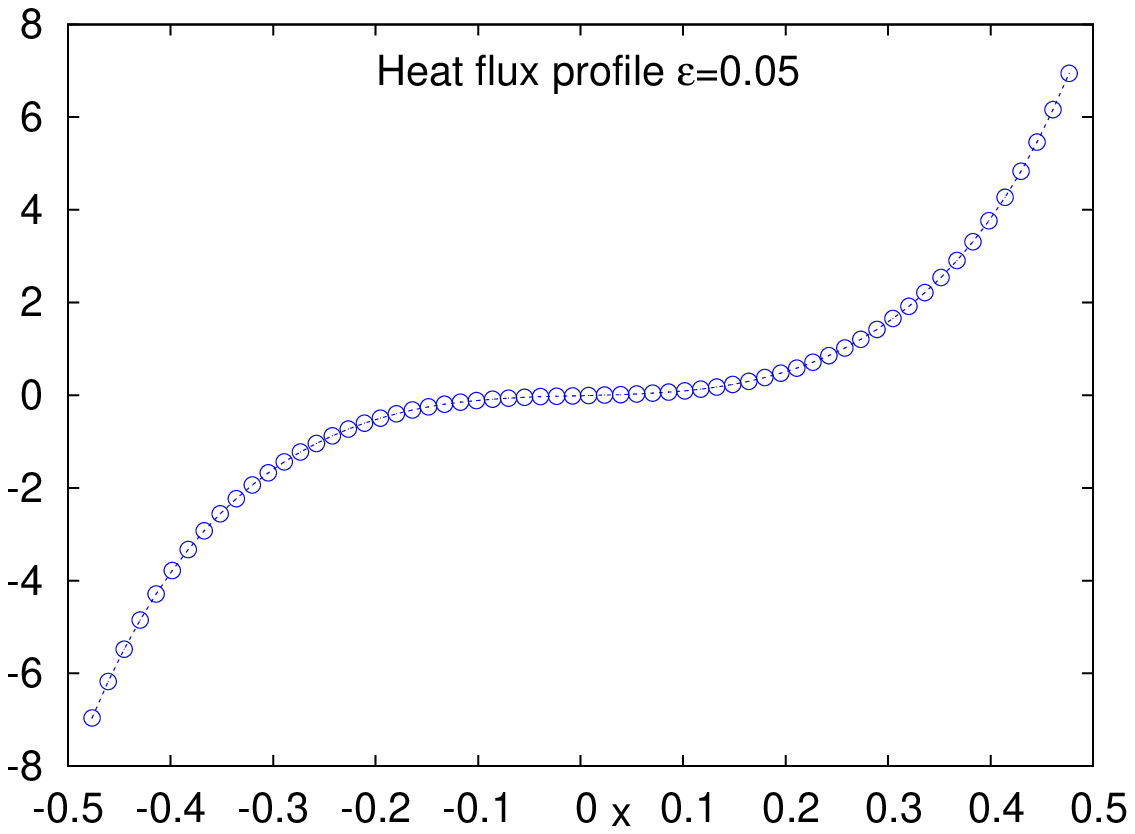} &
\includegraphics[width=7.5cm]{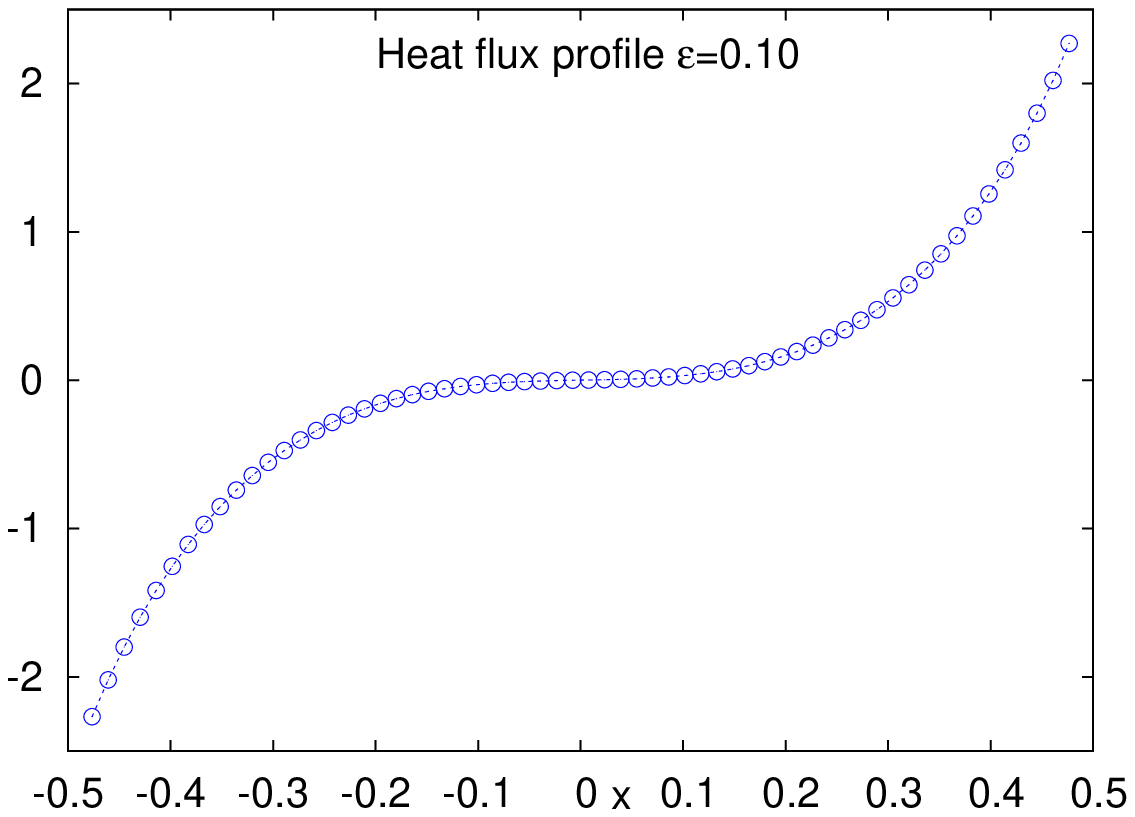}  
\\
(1) $\var=0.05$ &(2)  $\var=0.1$
\\
\includegraphics[width=7.5cm]{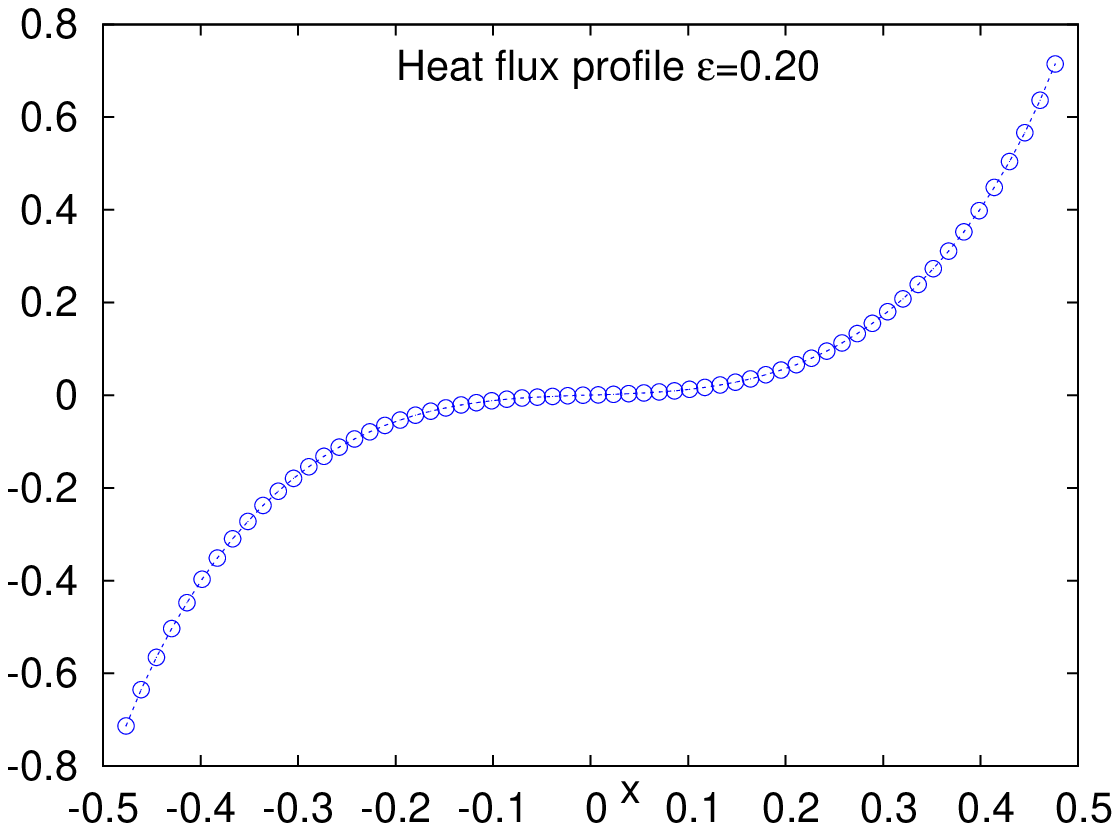} & 
\includegraphics[width=7.5cm]{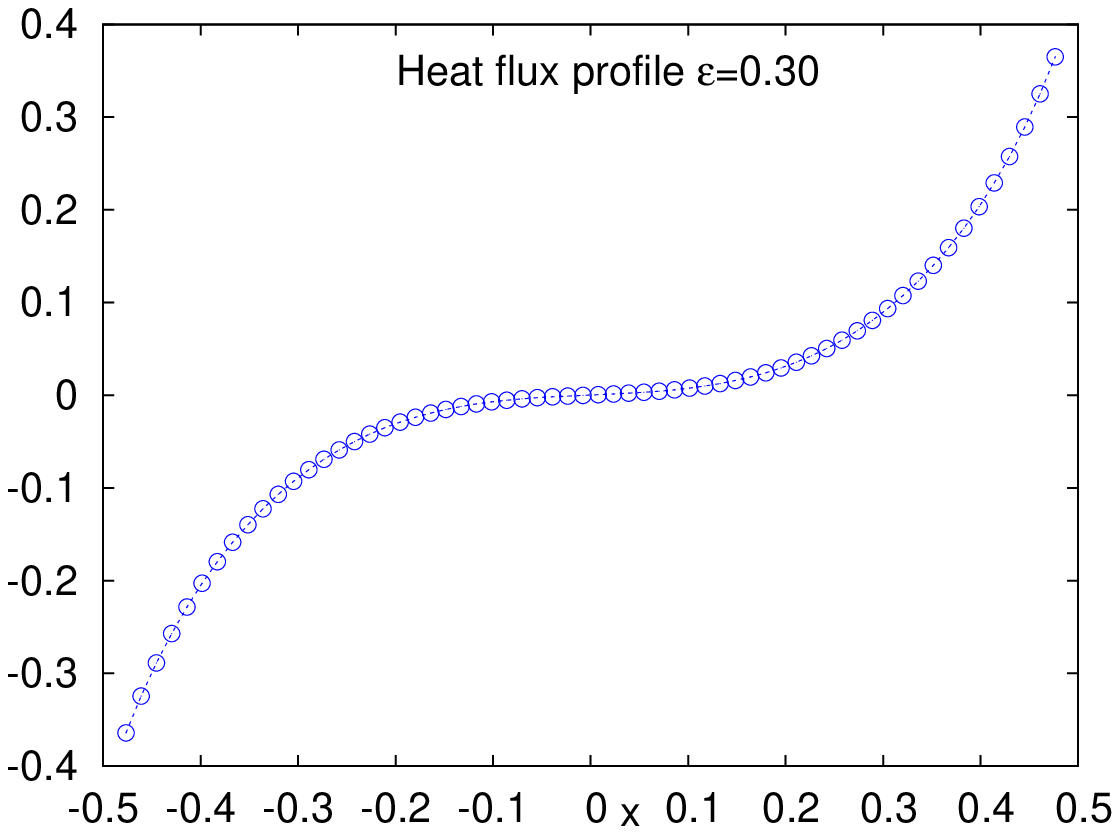}  
\\
(3) $\var=0.2$ &(4)  $\var=0.3$
\end{tabular}
\caption{Poiseuille flow: {\em heat flux for various Knudsen numbers $\var=0.05$, $0.1$, $0.2$ and $0.3$.}}
\label{Fig_pf_2}
\end{figure}

 The results are in good agreement with those presented in \cite{uribe} obtained from the BGK model for the Boltzmann equation. It clearly shows that Boltzmann's equation implies a gas-dynamics that has a more complex nature than standard hydrodynamics and most hydrodynamic quantities show boundary effects. Of course, in this simple configuration (a laminar stationary flow), it is still possible to derive analytic perturbative expressions for every hydrodynamic field and compare them with what is obtained in our simulations \cite{santos}. The effects beyond standard hydrodynamics is clearly observable, and are correctly described with our numerical scheme.


\subsection{The ghost effect}
\label{sec4-4}
It has been shown analytically \cite{Sone,Boby:95} and numerically \cite{SoneBob} on the basis of the kinetic theory that the heat-conduction equation is not suitable for describing the temperature field of a gas in the continuum limit  in an infinite domain without flow at infinity, where the flow vanishes in this limit. Indeed, as the Knudsen number of the system approaches zero, the temperature field obtained by the kinetic equation approaches that obtained by the asymptotic theory and not that of the heat-conduction equation, although the velocity of the gas vanishes. Here we review such a phenomenon, called the ghost effect, in the framework of the simulation of the Boltzmann equation.

We consider a gas  between two plates at rest in a finite domain. In this situation, the
stationary state at a uniform pressure (the velocity is equal to zero and the
pressure is constant) is an obvious solution of the Navier-Stokes equations; the
temperature field is determined by the heat conduction equation \cite{Sone}
$$
u = 0, \quad \rho\, T = C,\quad -\nabla_x\cdot(T^{1/2}\nabla_x\,T)=0.
$$
According to the Hilbert expansion with respect to the Knudsen number
$\var$, the density and temperature fields in the continuum limit are
affected by the velocity field, which is of order one with respect to
$\var$. Finally, the heat conduction equation, although extracted from
the incompressible Navier-Stokes system, is not appropriate in a whole class of
situations, in particular when isothermal surfaces are not parallel, thereby
giving rise to ``ghost effects''.

In this section, we will show that the numerical solution agrees with one
obtained by the asymptotic theory and not with the one obtained from the heat
conduction equation; this result is a confirmation of the validity of the
asymptotic theory. This problem has been already studied from the numerical
point of view for the time independent BGK operator, but not for the full time
dependent Boltzmann equation for hard sphere molecules.

Consider a rarefied gas between two parallel plane walls at $y=0$ and $y=1$.
Both walls have a common periodic temperature distribution $T_w$
$$
T_w(x) = 1 - 0.5\,\cos(2\,\pi \, x); \quad \forall x \in (0,1),
$$  
and a common small mean velocity $u_w$ of order $\var$ in its plane
$$
u_w(x) = (\var,0).
$$
On the basis of kinetic theory, we numerically investigate the behavior of
the gas, especially the temperature field, for various small Knudsen numbers
$\var$. Then, we will assume:
\begin{itemize}
\item the behavior of the gas is described by the Boltzmann equation for hard
  sphere molecules.
\item the gas molecules make diffuse reflection on the walls (complete accommodation).
\item the solution is 1-periodic with respect to $x$. Then, the average of
  pressure gradient in the $x$ direction is zero.
\end{itemize} 
In this example, the walls are moving with a speed of order $\var$. We apply the determinitic scheme in $2d_x\times 2d_v$ and choose a computational domain $[-7,7]\times [-7,7]$ in the velocity space with a number grid points  $n_v=32$ in each direction and for the space direction $n_x=n_y=50$. Finally we take $\Delta t=0.001$.

\begin{figure}[htbp]
\begin{tabular}{cc}
\includegraphics[width=7.5cm,height=7.cm]{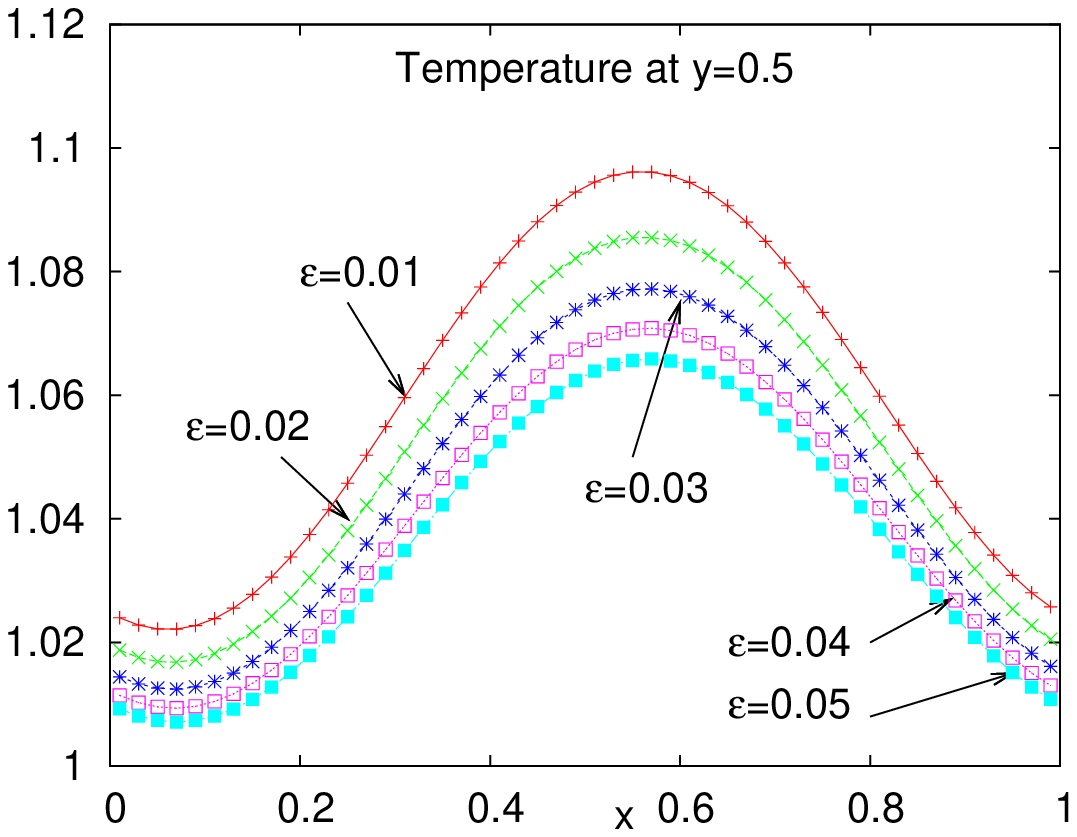}     
&
\includegraphics[width=7.5cm,height=7.cm]{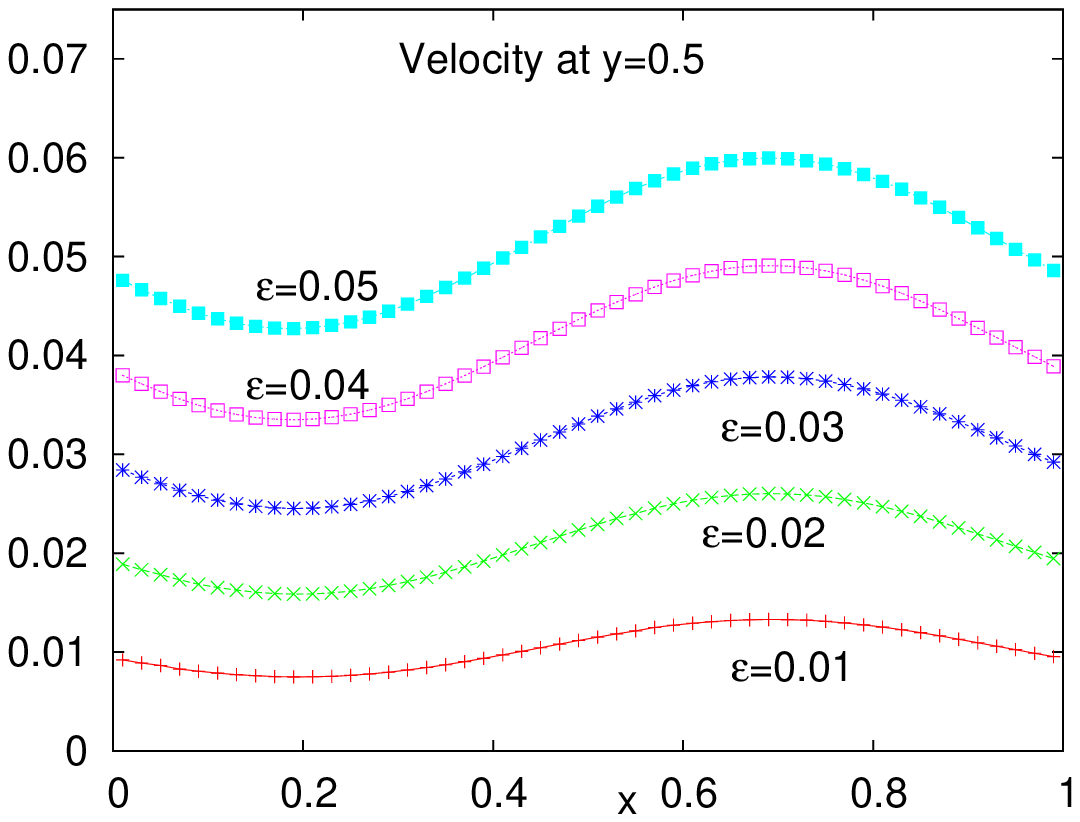}  
\\
(1)&(2)
\end{tabular}
\caption{Ghost effect: {\em temperature and mean velocity along $y=const$ 
    for various Knudsen numbers $\var=0.01$, $0.02$, $0.03$, $0.04$ and $0.05$, (1) temperature at $y=0.5$ (2) mean velocity $u$ at $y=0.5$.}}
\label{Fig_ge_1}
\end{figure}

\begin{figure}[htbp]
\begin{tabular}{cc}
\includegraphics[width=7.5cm,height=7cm]{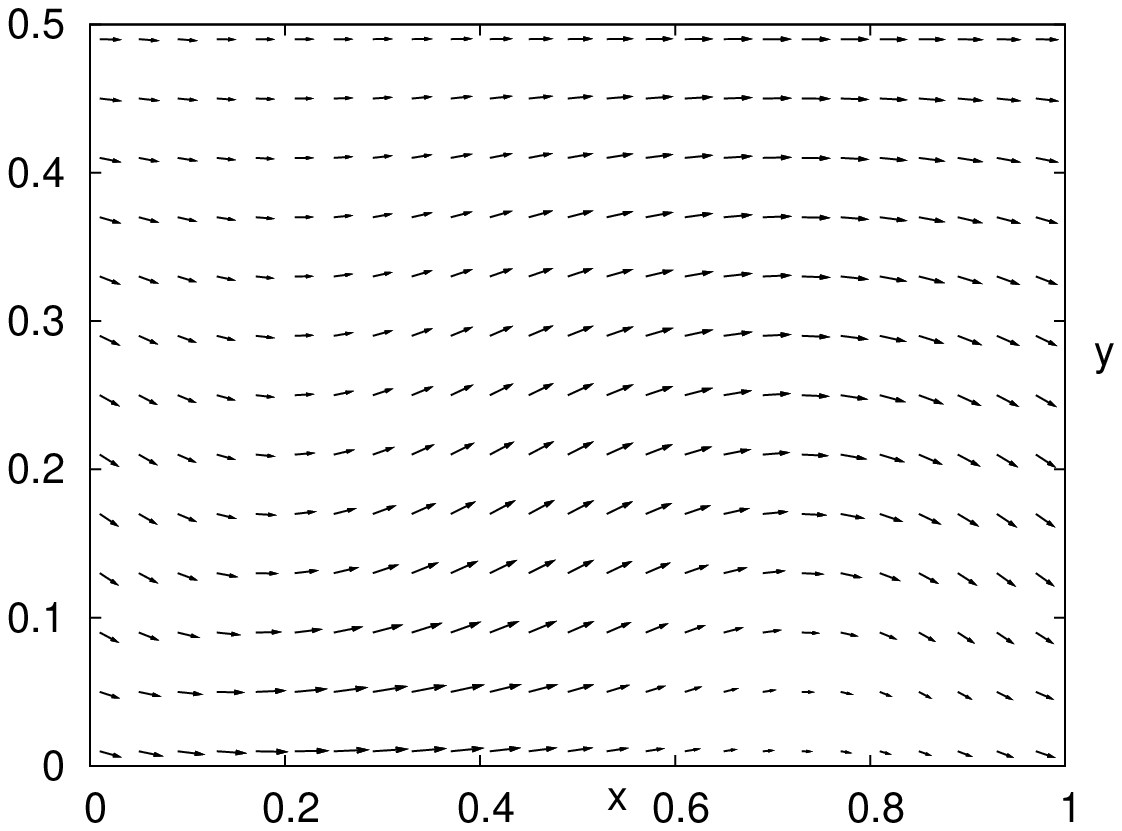}    
&
\includegraphics[width=7.5cm,height=7cm]{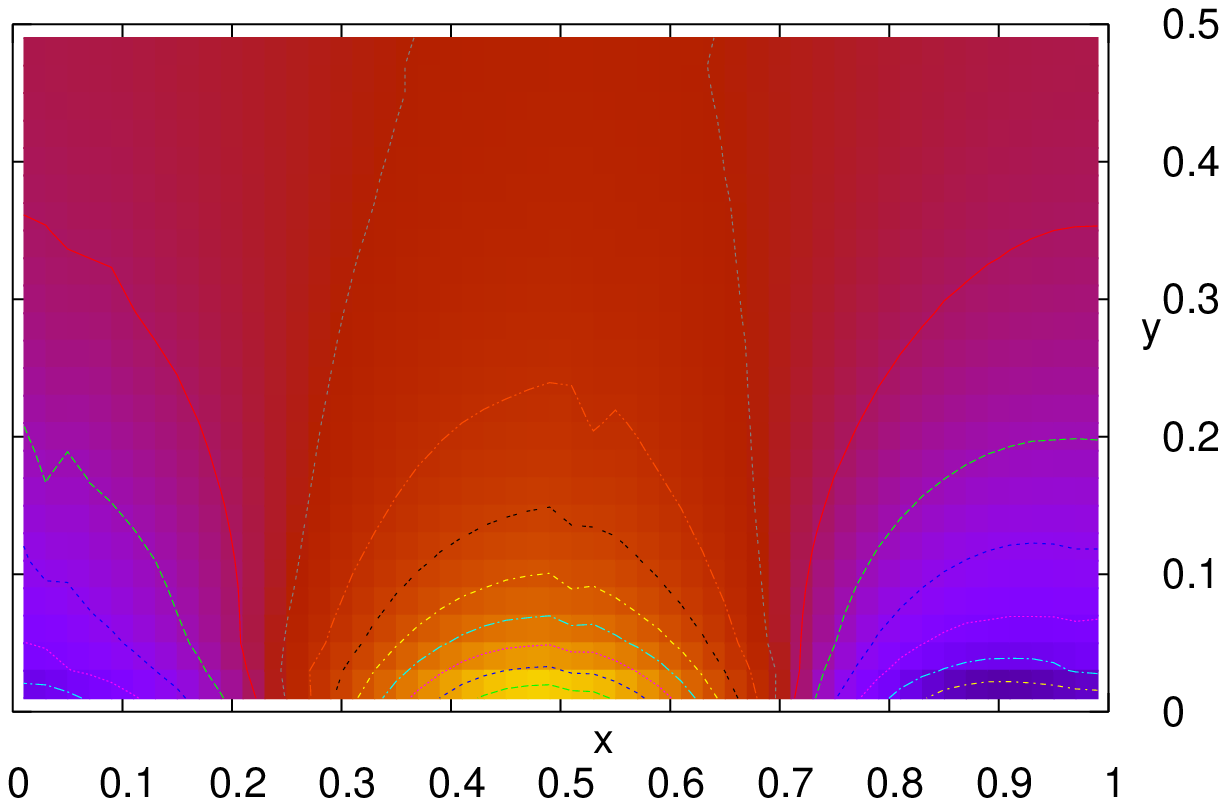}   
\\
(1)&(2)
\end{tabular}
\caption{Ghost effect: {\em (1) velocity field $u$, (2) isothermal lines for a fixed Knudsen number $\var=0.02$.}}
\label{fig7}
\end{figure}

The isothermal lines and the velocity field for $\var=0.02$ are shown in Figure
\ref{fig7}. These results are in good agreement with those obtained by
discretizing the the BGK operator \cite{Sone}. Moreover, according to the
numerical simulations presented in \cite{Sone}, the temperature field deviates
from one given by the heat conduction equation and is increasing when the
Knudsen number $\var$ goes to zero whereas the velocity flow is vanishing
(Figure \ref{Fig_ge_1}). 

The temperature converges to the temperature
given by the Asymptotic theory developed by Sone {\it et al.} \cite{Sone} and
not to the solution of the Heat equation. Let us note that in this particular
case, we cannot compute an accurate solution for very small Knudsen number, because the computational time to reach the stationary solution with a very good accuracy becomes too large.

\section{Conclusions}
\label{sec5}
In this paper we present an accurate deterministic method for the numerical
approximation of the space non homogeneous, time dependent Boltzmann equation in a bonded domain with different boundary conditions.

The method couples a second order finite volume scheme for the treatment of the transport step with a Fourier spectral method for the collision step.

It possesses a high order of accuracy for this kind of problems. In fact it is
second order accurate in space, and spectrally accurate in velocity. The high accuracy is evident from the quality of the numerical results that can be obtained with a relatively small number of grid points in velocity domain.

An effective time discretization allows the treatment of problems with a
considerable range of mean free path, and the decoupling between the transport
and the collision step makes it possible the use of parallel algorithms, which
become competitive with state-of-the-art numerical methods for the Boltzmann
equation.

The numerical results, and the comparison with other numerical results available in the literature, show the effectiveness of the present method for a wide class of problems.


\begin{flushleft} 
\signff 
\end{flushleft}

\end{document}